\newtheorem{proposition}{Proposition}
\newtheorem{lemma}{Lemma}
\newtheorem{theorem}{Theorem}
\newtheorem{corollary}{Corollary}
\theoremstyle{remark}
\newtheorem{rem}{Remark}
\let\epsilon=\varepsilon
\let\eps=\epsilon
\let\phi=\varphi
\DeclareMathOperator*{\argmin}{\arg\min}
\newcommand{\field}[1]{\mathbb{#1}}
\newcommand{\R}{\field{R}}
\newcommand{\N}{\field{N}}
\newcommand{\F}{{\mathscr{F}}}
\newcommand{\Nr}{{\mathcal N}}
\newcommand\eref[1]{(\ref{#1})}
\def\I{{\mathbf 1}}
\begin{document}
	\title{Convergence rates for smooth $k$-means change-point detection}
	
\author{Aurélie Fischer $\&$ Dominique Picard}
\affil{\large LPSM (UMR CNRS 8001)\\\large Universit\'e Paris Diderot\\ \large Case courrier 7012 \\\large 75205 Paris Cedex 13, France\\\medskip\large \url{aurelie.fischer@univ-paris-diderot.fr}\\\large \url{dominique.picard@univ-paris-diderot.fr}
}

\maketitle
			
			\begin{abstract}
					In this paper, we consider the estimation of a change-point for possibly high-dimensional data in a Gaussian model, using a $k$-means method. 
				We prove that, up to a logarithmic term,  this change-point estimator  has a minimax rate of convergence. 
				
				Then, considering the case of sparse data, with a Sobolev regularity, we propose a smoothing procedure based on Lepski's method and show that the resulting estimator attains the optimal rate of convergence.
				
				Our results are illustrated by some simulations. As the theoretical statement relying on Lepski's method depends on some unknown constant, practical strategies are suggested to perform an optimal smoothing. 
			\end{abstract}
			
\noindent\emph{Keywords}: Clustering, $k$-means, change-point detection, minimax rates, high dimension, smoothing, Lepski's method.\\

\noindent\emph{AMS 2000 Mathematics Subject Classification}: 62H30;  62G05.

\section{Introduction} \label{sec-intro}

\subsection{Clustering and change-point detection}

An important problem in the vast domain of statistical learning is the question of unsupervised classification of high dimensional data.
Many examples fall into this category such as the classification of curves, of images, the segmentation of  domains (geographical, economical, medical, astrophysical...) into homogeneous regions given  observed quantities.

The problem can be summarized into finding $r$ clusters (``coherent ensembles'') for  $n$ individuals, given that each one is observed through a $d$-dimensional vector. We will be mostly interested here (although it is not a necessary condition) in the case where $d$ is large, possibly very large compared to $n$.
A lot of examples fall into this setting, for instance the case where you need  to partition a geographic zone into smaller areas which are highly homogeneous with respect to climatic quantities such as temperature, wind measures. In such a case the data generally consists, for $n$ meteorological stations ---regularly spaced on the zone--- of years of hourly measures, leading to a huge vector of dimension $d$ for each station.

In such a case, the clustering problem can generally be summarized in two steps: in a first step, a preprocessing finds a representation of the data, which can be the raw data or a more subtle transformation.
Then, a segmentation  algorithm is performed on the transformed data. We will focus here on the case where this algorithm is the famous $k$-means algorithm, corresponding to estimation via the empirical risk minimizer.

Between or around these two steps and especially when $d$ is very large, there is a need for ``smoothing'', or in other terms, to reduce the dimension $d$. This is especially important from a computational point of view. Without this step, the $k$-means algorithm might be unstable or even not work at all.

In this paper, we will consider the problem from a theoretical point of view (as opposed to algorithmic point of view).

More precisely, we will concentrate on the following questions:
\begin{itemize}
	\item[(1)] Without referring to the feasibility, what is more efficient to obtain a clustering result: keeping the raw data or smoothing the data?
\item [(2)] If the data are high-dimensional but ``sparse'', is there a way to use this sparsity to get better clustering results?
\item [(3)] If smoothing proves to be efficient, how could it be performed? Do usual nonparametric smoothing methods work as well in a clustering problem?
\item [(4)]  Does on-line (signal by signal smoothing ---station by station in the meteorological example) performs as well as  off-line smoothing (using a preprocessing  involving all the signals)?

\end{itemize}

We will attack this problem in a much simpler setting, and see that even in this simplified framework, there are still open questions. The first simplification will be that the number of classes will be fixed: we will assume that there are exactly two classes. Moreover, we make the more restrictive assumption that the change between one class and the other occurs on a time scale. In other words, there exists a change-point $\tau$: before $n\tau$, the observations have a certain regime, after $n\tau$, they have another regime.

In a wide range of areas,  change-point problems may occur in a high-dimensional context. This is the case for instance in the analysis of  network traffic data \cite{LR09,LLC12}, in bioinformatics, when studying copy-number variation \cite{BlV11,ZSJL}, in astrostatistics \cite{BMS11,SMF14,MCMBPB16} or  in multimedia indexation \cite{HVLC09}. In these practical applications, the number of observations is relatively small compared with their dimension, with the change-point possibly occurring  only for  a few components.

The change-point problem has its own interest and has also a long history. For an introduction to the domain, the reader may refer for instance to the monographs and articles by \cite{Shi78}, \cite{Rit90}, \cite{Mu92}, \cite{BassNiki}, \cite{BD93}, \cite{CMS}, \cite{CH97}.
Change-point detection based on resampling has been investigated in \cite{sample1} and \cite{sample2}.

Minimax estimation is considered for example in \cite{Kor87}, in the  Gaussian white noise model. 
In this framework, high-dimensional change-point problems are studied by \cite{Korostelev2008}, who propose an asymptotically minimax estimator of the change-point location, when the Euclidean norm of the gap tends to  $\infty$ as $d \to\infty$.

\subsection{Main results and organization of the paper}
We begin Section 2 by introducing the two class model, and the change-point model.
As well, we present the empirical minimizer estimator of the change-point, depending on the smoothing.

We prove that up to a logarithmic term the empirical minimizer ($k$-means) has a minimax rate of convergence. It is important to notice that we do not know whether this logarithmic term is necessary or not. Indeed, in \cite{Korostelev2008}, ``the edges are known'', meaning that the minimax rate is established in the case where the change-point cannot  occur before a {\textit{known}} proportion of the observation and as well after a {\textit{known}}  proportion of observation. Our method of estimation is agnostic to this knowledge, creating obvious additional  difficulties.

Secondly, in Section 3, we show that if the data is sparse ---here, in a ``Sobolev'' sense, there exists an optimal smoothing.
To attain this optimal smoothing, we provide a method relying on the Lepski smoother. This method, which basically consists in performing a Lepski smoothing on a surrogate data vector built on the whole observation, has the advantage on being performed beforehand and will not create additional computational difficulties in the $k$-means algorithm.
It could be interestingly compared with the lasso-$k$-means (see \cite{levrard2013, levrard2015}), which is known to be difficult to implement in large data sets.

We provide in Section 4 a numerical experimentation  of our methods, which shows promising results.

Section 5 is devoted to the proofs.

\section{Two class model -- Change-point model}

Let $n\geq3$. For a set $A$, we will denote  its cardinal by $\# A$.
We observe $n$ independent signals $Y_1,\ldots,Y_n$. We assume that each signal $Y_i$, $i=1,\dots,n,$ is observed discretely, through $d$ components: for every $i$, $Y_i=(Y_{i,1},\ldots, Y_{i,d})$ is a random vector with values in $\R^d$.

We consider the following  Gaussian clustering framework.
We suppose that there exist a set $A\subset\; \{1,\ldots,n\}$ (unknown)  and two  vectors $\theta^-$ and $\theta^+$ of $\R^d$  such that
\begin{align*}
Y_i&=\theta_i+\eta_{i}, \quad 1\le i\le n, \quad \eta_{i} \; \mbox{i.i.d.}\;\Nr(0, \sigma^2I_d),\\
\theta_i&=\theta^- ,\; \forall i\in A,\\
\theta_i&=\theta^+ ,\; \forall i\in A^c.
\end{align*}

Recently, \cite{EniHar17} have considered a similar Gaussian model from the point of view of testing. The aim is to test whether there is a change-point or not. The Gaussian vectors may be high-dimensional, with the change possibly occurring in an unknown subset of the components. In a double asymptotic setting, where the number of observations and the dimension grow to infinity, the authors build an optimal test.

\begin{rem}
	\begin{enumerate}
		 \item The covariance matrix of the noise $\eta_i$ is chosen to be  proportional to identity. Choosing instead a covariance of the form $\sigma^2J$, where $J $ is a known matrix, would lead to a somewhat identical discussion by a simple change of variable, provided that we make also the appropriate regularity assumptions on the parameters $\theta^+$ and $\theta^-$. 
	
	\item For a first step, we suppose here $\sigma^2$ to be known. Note that $\sigma^2$ may depend on $d$. For instance, if we think of a Gaussian white noise, then $\sigma^2$ would frequently be of the form $\sigma_0^2/d$, where $\sigma_0^2$ is an absolute known constant. We will not investigate the case where $\sigma^2$ is unknown.
	
	\item
	The fact that the noise is Gaussian is a simplification which is useful but not essential: basically, concentration inequalities are needed and similar results are likely under sub-Gaussian hypotheses on the errors.
	\end{enumerate}
	
\end{rem}

We will mainly be interested in the behavior of the maximum likelihood  (MLE) estimators (also  known as two-class $k$-means estimators in this context):

\begin{multline*}
\hat B= 
\argmin_{B\subset \{1,\ldots,n\}}
\left\{
\sum_{i\in B}\sum_{\ell=1}^d\Big(Y_{i,\ell}-\frac 1{\#B}\sum_{i\in B}Y_{i,\ell}\Big)^2\right.\\
\left.+\sum_{i\in B^c}\sum_{\ell=1}^d\Big(Y_{i,\ell}-\frac 1{\#B^c}\sum_{i\in B^c}Y_{i,\ell}\Big)^2\right\}.
\end{multline*}

\subsection{Simplified two-class  model: change-point  clustering}
Change-point clustering essentially means that the set $A$ has the following form: 
$$A=\{1,\ldots ,[n\tau]\}.$$
In other terms, 
there exist a change-point $ 0<\tau<1$ and two  vectors $\theta^-$ and $\theta^+$ of $\R^d$, such that the model is given by \begin{align}\label{eq:model}
&Y_i=\theta_i+\eta_{i}, \quad 1\le i\le n, \quad \eta_{i} \; \mbox{i.i.d.}\;\Nr(0, \sigma^2I_d),\\
&\forall i\le { n\tau},\quad \theta_i=\theta^-\nonumber, \\
&\forall i> { n\tau}, \quad \theta_i=\theta^+\nonumber.
\end{align}
To prove our results, we will additionally  assume the following conditions in different places.

\subsubsection{Condition [Edge-out]} We  assume that there exists $0<\eps<1/2$, such that $\eps<\tau<1-\eps$. This condition is introduced basically to avoid problems at the border of the interval $[0,1]$.
It is important to notice that our results will depend on $\eps$. However, the procedure is agnostic to $\eps$, which  is not supposed to be known.

\subsubsection{  Condition on the means [$\Theta(s,L)$]}
 For $s>0$,
we define
$$ \Theta(s,L):=\left\{\theta\in \R^d, \; \sup_{K\in \N^*}K^{2s}\sum_{k\ge K}(\theta_k)^2\le L^2\right\}.$$
\noindent We will suppose that $\theta^-$ and $\theta^+$ are in $\Theta(s,L)$.

 \begin{rem}
 	This assumption expresses a form of sparsity of the coefficients, which reflects an ordering in their  importance: the first ones are supposedly more important than the last ones. This is quite a reasonable assumption when, as it is generally the case,  the clustering is operated in two steps: during the first one,  the data is projected to a feature space (in a linear or nonlinear way)  supposedly reflecting the salient parts of the data. 

Note that there are possible extensions to other kinds of sparsity, considering for instance coefficients belonging to the set $${ \Theta_q(L):=\left\{\theta\in \R^d, \; \sum_{k}|\theta_k|^q\le L\right\}},$$ where $q<1$, but this choice requires more sophisticated smoothing algorithms.
 \end{rem}

\subsection{Smooth estimation of $\tau$}
Our problem is, considering the regularity assumptions, to determine, for the problem of estimating the change-point $\tau$,  whether or not it is efficient to smooth the data. More specifically, we will investigate the effect of  replacing the vectors $Y_i= (Y_{i,1}, \dots, Y_{i,d})$, $i\le n$ (called in the sequel ``raw data''), by, for $T<d$,  $Y_i(T):=(Y_{i,1}, \dots, Y_{i,T})$, $i\le n$, the vectors of $\mathbb{R}^T$ of the $T$ first coordinates of 
$Y_i$.

We consider the associated empirical minimizer:
\begin{multline*}
\hat k(T)= 
\argmin_{k\in \{2,\ldots ,n-2\} }
\left\{
\sum_{i=1 }^{k}\sum_{j=1}^T\Big(Y_{i,j}-\frac 1{k}\sum_{i=1}^ {k}Y_{i,j}\Big)^2\right.\\ \left.+
\sum_{i=k+1}^n\sum_{j=1 }^T\Big(Y_{i,j}-\frac 1{n-k}\sum_{i= k+1}^n Y_{i,j}\Big)^2\right\},
\end{multline*}
and we set $$\hat \tau(T)=\frac{\hat k(T)}{n}.$$

To investigate the behavior of the procedure, we begin by proving a result  describing the behavior of this estimated change-point $\hat\tau(T)$.

In the sequel, we will use the notation 
\begin{equation*}
\Delta^2:= \sum_{j=1}^d(\theta^{-}_j-\theta^{+}_j)^2=\|\theta^+-\theta^-\|^2.
\end{equation*}
We also  define, for $T\leq d$,
\begin{equation*}
\Delta^2_T:=\sum_{j=1}^T(\theta^{-}_j-\theta^{+}_j)^2, \quad \Psi_n(T,\Delta_T)=\frac {\sigma^2}{n\Delta^2_T}\left(1\vee \frac {\sigma^2T}{n\Delta^2_T}\right).
\end{equation*}
\begin{proposition}\label{beta}Let us assume condition [edge-out]. 

For any $\gamma >0$, there exist  constants $\kappa(\gamma,\eps)$ and $c(\gamma,\eps)$ such that, 
if
$$
{\Delta^2_T}\ge c(\gamma,\eps)\frac{\sigma^2\ln (n)}n,$$   
then 
$$P\Big(|\hat \tau(T)-\tau|\ge  \kappa(\gamma,\eps) \ln(n)\Psi_n(T,\Delta_T)\Big)\le n^{-\gamma}.$$
 \end{proposition}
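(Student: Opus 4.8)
The plan is to rewrite $\hat k(T)$ as a projection least-squares estimator, run an oracle-type ``basic inequality'', and reduce the whole statement to a sharp tail bound for a single traceless quadratic form in the noise. Set $k^*:=[n\tau]$ and collect the truncated observations, their means and the noise into $n\times T$ matrices $Y$, $\theta$, $\eta=Y-\theta$, with $\|\cdot\|$ the Frobenius norm. For $k\in\{2,\dots,n-2\}$ the within-cluster sum of squares defining $\hat k(T)$ equals $\|(I-P_k)Y\|^2$, where $P_k$ is the orthogonal projection of $\R^{n\times T}$ onto the block-constant subspace $\mathcal S_k=V_k\otimes\R^T$, $V_k=\mathrm{span}\{\I_{\{1,\dots,k\}},\I_{\{k+1,\dots,n\}}\}\subset\R^n$; thus $\hat k(T)$ maximises $k\mapsto\|P_kY\|^2$ and $\dim\mathcal S_k=2T$. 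The mean matrix $\theta$ lies in $\mathcal S_{k^*}$, so $(I-P_{k^*})\theta=0$. Comparing the values of $\|(I-P_k)(\theta+\eta)\|^2$ at $\hat k(T)$ and at $k^*$ (which belongs to the range for $n$ large in terms of $\eps$) and expanding the square gives
\[
g(\hat k)\ \le\ 2\,\big|\big\langle (I-P_{\hat k})\theta,\,\eta\big\rangle\big|\ +\ \big\langle \eta,\,(P_{\hat k}-P_{k^*})\eta\big\rangle ,\qquad g(k):=\|(I-P_k)\theta\|^2\ \ \big(g(k^*)=0\big),
\]
and a short computation with the two segment means yields, under [edge-out], $\eps\,|k-k^*|\Delta_T^2\le g(k)\le |k-k^*|\Delta_T^2$ for every $k$.

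Write $m=|k-k^*|$. By the basic inequality and the lower margin bound, if $\hat k(T)=k$ with $m\ge m_0$ then $|\langle (I-P_k)\theta,\eta\rangle|\ge\tfrac{\eps}{4}m\Delta_T^2$ or $\langle\eta,(P_k-P_{k^*})\eta\rangle\ge\tfrac{\eps}{2}m\Delta_T^2$; hence $P(|\hat k(T)-k^*|\ge m_0)$ is at most the sum over $\{k:\ m\ge m_0\}$ of the probabilities of these two events, which I would control by peeling over $m$ (with at most two values of $k$ per $m$). The linear term $\langle (I-P_k)\theta,\eta\rangle$ is centred Gaussian with variance $\sigma^2 g(k)\le\sigma^2 m\Delta_T^2$, so its probability is at most $2\exp(-c\eps^2 m\Delta_T^2/\sigma^2)$, and summation keeps this part below $\tfrac12 n^{-\gamma}$ as soon as $m_0\gtrsim_{\gamma,\eps}\ln(n)\,\sigma^2/\Delta_T^2$, i.e.\ $\ln n$ times the first branch of $n\Psi_n$.

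The quadratic term is the heart of the matter, and I expect the real work there: $\langle\eta,P_k\eta\rangle$ taken alone is of order $\sigma^2 T$ (because $\mathrm{tr}\,P_k=2T$), far too large, so one must exploit the cancellation inside $P_k-P_{k^*}$, a traceless symmetric contraction comparing two \emph{nearby} subspaces. Since $P_k=P_{V_k}\otimes I_T$ and $V_k,V_{k^*}$ are two-dimensional with common line $\R\I$, the difference $P_{V_k}-P_{V_{k^*}}$ has eigenvalues $\pm\mu_k$ (and $0$) with $\mu_k^2=1-\langle u_k,u_{k^*}\rangle^2$ for the unit CUSUM contrasts $u_k\perp\I$; the classical identity $\langle u_k,u_{k^*}\rangle^2=\tfrac{k(n-k^*)}{k^*(n-k)}$ for $k\le k^*$ (and its mirror for $k>k^*$) gives $\mu_k^2=\tfrac{nm}{k^*(n-k)}\le\tfrac{m}{n\eps^2}$ under [edge-out]. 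Consequently $\langle\eta,(P_k-P_{k^*})\eta\rangle$ has the law of $\mu_k\sigma^2(Z-Z')$ with $Z,Z'$ independent $\chi^2_T$ variables — in particular it is centred — and a Bernstein bound (using $\mu_k\le1$ in the linear regime and $\mu_k^2\le m/(n\eps^2)$ in the quadratic one) gives
\[
P\Big(\langle\eta,(P_k-P_{k^*})\eta\rangle\ge\tfrac{\eps}{2}m\Delta_T^2\Big)\ \le\ \exp\!\Big(-c\,\min\Big(\tfrac{\eps^4\,m\,n\,\Delta_T^4}{\sigma^4 T},\ \tfrac{\eps\,m\,\Delta_T^2}{\sigma^2}\Big)\Big).
\]
Summing over $k$ and $m\ge m_0$ — the geometric remainder is $O(n)$, hence an extra $\ln n$ in the exponent budget, and this is exactly where the hypothesis $\Delta_T^2\ge c(\gamma,\eps)\sigma^2\ln(n)/n$ (equivalently $\ln(\sigma^2/\Delta_T^2)=O(\ln n)$) is used — this part stays below $\tfrac12 n^{-\gamma}$ as soon as $m_0\gtrsim_{\gamma,\eps}\ln(n)\,\tfrac{\sigma^2}{\Delta_T^2}\big(1\vee\tfrac{\sigma^2 T}{n\Delta_T^2}\big)=n\ln n\,\Psi_n(T,\Delta_T)$; the two branches of the minimum produce precisely the two branches of $\Psi_n$.

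Finally, taking $m_0:=\kappa(\gamma,\eps)\ln(n)\,n\,\Psi_n(T,\Delta_T)$ with $\kappa(\gamma,\eps)$ chosen large enough (a suitable multiple of $(\gamma+1)/\eps^4$) makes both sums $\le\tfrac12 n^{-\gamma}$, so $P(|\hat k(T)-k^*|\ge m_0)\le n^{-\gamma}$, and $|\hat\tau(T)-\tau|\le|\hat k(T)-k^*|/n+1/n$ gives the claim (the additive $1/n$ being absorbed into the constant, or dropped if one assumes $n\tau\in\N$); the same condition on $\Delta_T^2$ also keeps $m_0/n<1$ so that the bound is not vacuous, while the estimator itself uses neither $\eps$ nor $c$. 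The main obstacle is the uniform-in-$k$ control of the traceless form $\langle\eta,(P_k-P_{k^*})\eta\rangle$ — the operator estimate $\mu_k^2\asymp_\eps m/n$ together with the two-regime Bernstein inequality — and this is precisely what produces both the high-dimensional correction $\sigma^2 T/(n\Delta_T^2)$ in $\Psi_n$ and the dependence of the constants on the edge parameter $\eps$.
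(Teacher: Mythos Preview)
Your argument is correct and, at the top level, yields exactly the paper's decomposition: your linear piece $\langle(I-P_k)\theta,\eta\rangle$ coincides with the paper's $N_1(k/n)-N_2(k/n)$, and your quadratic piece $\langle\eta,(P_k-P_{k^*})\eta\rangle$ equals $\sigma^2\sum_j\big(V_j^2(k/n)+W_j^2(k/n)-V_j^2(\tau)-W_j^2(\tau)\big)$, which is precisely the paper's term $P_1$. The genuine difference lies in how $P_1$ is controlled. The paper expands $W_j^2(k/n)-W_j^2(\tau)$ and $V_j^2(k/n)-V_j^2(\tau)$ into three summands each, splits into the regimes $n\Delta_T^2\lessgtr 32T\sigma^2/\eps^2$ (with a further sub-split $\sigma^4T/(n\Delta_T^2)^2\lessgtr1/\lambda$), and bounds the cross products by conditioning on $\F=\sigma(\eta_{i,j}:i\le n\tau)$ or $\F_k$; this produces a patchwork of moderate- and large-deviation $\chi^2$ tail bounds and an additional term $n\exp(-c\eps^2 n\Delta_T^2/\sigma^2)$ which is killed by the hypothesis on $\Delta_T^2$. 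Your spectral route collapses all of this: the observation $P_{V_k}-P_{V_{k^*}}=u_ku_k^\top-u_{k^*}u_{k^*}^\top$ with eigenvalues $\pm\mu_k$, $\mu_k^2\le m/(n\eps^2)$, turns the whole quadratic form into $\mu_k\sigma^2(\chi^2_T-\chi^2_T)$, and a single Bernstein inequality delivers both branches of $\Psi_n$ without any case distinction or conditioning. What you gain is brevity and a transparent explanation of why the high-dimensional correction $\sigma^2T/(n\Delta_T^2)$ and the $\eps$-dependence arise (through $\mu_k$); what the paper's hands-on computation buys is that every constant is explicit and only the two elementary $\chi^2$ lemmas of its Appendix are invoked, with no operator-level argument.
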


\begin{rem}\begin{enumerate}
		\item 
	 Note that no condition on the sparsity of $\theta^+$ and $\theta^-$ is needed for this result.

\item\label{mM}
 Thanks to \cite{Korostelev2008}, one can observe that $\Psi_n(T,\Delta_T)$ is the minimax rate in this framework. Compared to their result, we are apparently loosing a logarithmic factor. But it is important to stress that in their paper, the bound $\eps$ was supposed to be known, whereas our estimator $\hat\tau(T)$ is adaptive in $\eps$. So it is not absurd to suggest that the logarithmic term might be necessary.
  
  \item For $T=d$, $\Psi_n(d,\Delta_d)=\Psi_n(d,\Delta)$.
  The rate is composed of two different regimes: a ``good one'' ${\frac {\sigma^2\ln(n)}{n\Delta^2}}$, not depending on the dimension $d$ and a ``slow one'' $ \frac {\sigma^4\ln(n)d}{(n\Delta^2)^2}$ which is rapidly deteriorating with the dimension.
   
From the results above, we deduce that if $c(\gamma,\eps)\frac{\sigma^2\ln (n)}n \le \Delta^2< \frac{\sigma^2d}n$, the rate of convergence is $ \frac {\sigma^4\ln(n)d}{(n\Delta^2)^2}$, whereas 
 if $ \Delta^2\ge \frac{\sigma^2d}n\vee  c(\gamma,\eps)\frac{\sigma^2\ln (n)}n$, it is ${\frac {\sigma^2\ln(n)}{n\Delta^2}}$. This last rate is obviously much better, and with this latter condition on $\Delta$, taking $T=d$ (so raw data) allows to obtain the best  rate ${\frac {\sigma^2\ln(n)}{n\Delta^2}}$. Taking a smaller $T$ could lead to a reduction of $\Delta_T$ damaging the rate.

 However this latter condition is quite restrictive on $\Delta$ when $d$ is large.
 In the next paragraph, we will try to refine this condition, gaining on the size $T$ of the smoothed vector.

\item Without assumptions on the behavior of the parameters $\theta^+$ and $\theta^-$, there is nothing much to hope about the way $\Delta_T$ is increasing in $T$. However, the regularity assumption $[\Theta(s,L)]$ allows us to assume that for $T$ such that $\Delta^2\ge 8L^2 T^{-2s}$, then $\Delta_T$ and $\Delta$ are comparable, in the sense that $\Delta_T^2\ge\Delta^2/2$. Indeed, 
 $\Delta^2 -\Delta^2_T=\sum_{j=T+1}^{d}(\theta^{-}_j-\theta^{+}_j)^2\leq4 T^{-2s}L^2$, so that  $$\frac{\Delta^2_T}{\Delta^2}\geq 1-\frac{4 T^{-2s}L^2}{\Delta^2}\geq 1/2.$$ This is precisely what is exploited in the first part of Theorem \ref{T,s} below.

\item Let us observe that if $\Delta_T$ and $\Delta$ are comparable, then 
 $\Psi_n(T,\Delta_T)\sim \Psi_n(T,\Delta)$ is much easier to analyse.
 In particular we see that again it is composed of two regimes ---a slow one and a good one--- and the dependence in $T$ is more clear: $\frac{\sigma^2\ln(n)}{n\Delta^2}$ for $T\le \frac{n\Delta^2}{\sigma^2}$, and $\frac{\sigma^4\ln(n)T}{(n\Delta^2)^2}$ for larger $T$'s.
 \\
 This is also corresponding to what is frequently observed in practical applications: when the dimension is increasing, one first observes indications of convergence being decreasing, then stable for a while and then increasing substantially.
	\end{enumerate}
\end{rem}

\subsection{Consequences of Proposition \ref{beta}}

An immediate consequence of Proposition \ref{beta} is the following theorem.

\begin{theorem} \label{T,s}

We consider the model \eref{eq:model}, and we assume conditions [edge-out],
and $[\Theta(s,L)]$. 

For any $\gamma >0$, there exist  constants $\kappa(\gamma,\eps)$ and $c(\gamma,\eps)$ such that, 
if
$$
\Delta^2\ge \left[2c(\gamma,\eps)\frac{\sigma^2\ln (n)}{n}\vee 8L^2 T^{-2s}\right],$$  
then 
$$P\Big(|\hat \tau(T)-\tau|\ge \kappa(\gamma,\eps)\ln(n)\Psi_n(T,\Delta)\Big)\le n^{-\gamma}.$$
If, now,
\begin{equation}
\Delta^2\ge \left[2c(\gamma,\eps)\frac{\sigma^2\ln (n)}{n}\vee 8L^2 T^{-2s}\vee\frac{\sigma^2T}{n} \right],\quad \lambda \geq \kappa(\gamma,\eps) \ln(n),\label{cond-delta}
\end{equation} 
$$P\Big(|\hat \tau(T)-\tau|\ge \kappa(\gamma,\eps)\frac{\sigma^2\ln(n)}{n\Delta^2}\Big)\le n^{-\gamma}.$$

\end{theorem}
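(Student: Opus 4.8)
The plan is to read Theorem \ref{T,s} off Proposition \ref{beta}: the only work is to convert the hypothesis and the rate, which Proposition \ref{beta} states in terms of $\Delta_T$, into the corresponding statements in terms of $\Delta$, using the Sobolev-type assumption $[\Theta(s,L)]$.

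\textbf{Step 1 (comparison $\Delta_T\asymp\Delta$).} I would first record the elementary bound already noted in the remark following Proposition \ref{beta}: since $\theta^-,\theta^+\in\Theta(s,L)$, applying the defining inequality with $K=T+1$ gives $\sum_{j>T}(\theta_j^-)^2\le L^2T^{-2s}$ and likewise for $\theta^+$, whence $\Delta^2-\Delta_T^2=\sum_{j>T}(\theta_j^--\theta_j^+)^2\le 4L^2T^{-2s}$. Hence, as soon as $\Delta^2\ge 8L^2T^{-2s}$ (one of the hypotheses), $\Delta_T^2\ge\Delta^2-4L^2T^{-2s}\ge\Delta^2/2$.

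\textbf{Step 2 (first assertion).} Under $\Delta^2\ge 2c(\gamma,\eps)\sigma^2\ln(n)/n\vee 8L^2T^{-2s}$, Step 1 gives $\Delta_T^2\ge\Delta^2/2\ge c(\gamma,\eps)\sigma^2\ln(n)/n$, which is exactly the hypothesis of Proposition \ref{beta} (same $c$, same $\gamma$). Therefore $P\big(|\hat\tau(T)-\tau|\ge\kappa(\gamma,\eps)\ln(n)\,\Psi_n(T,\Delta_T)\big)\le n^{-\gamma}$. It remains to replace $\Psi_n(T,\Delta_T)$ by $\Psi_n(T,\Delta)$: both factors of $\Psi_n(T,x)=\frac{\sigma^2}{nx}\big(1\vee\frac{\sigma^2T}{nx}\big)$ are nonincreasing in $x$, so $\Delta_T^2\ge\Delta^2/2$ yields $\Psi_n(T,\Delta_T)\le\frac{2\sigma^2}{n\Delta^2}\big(1\vee\frac{2\sigma^2T}{n\Delta^2}\big)\le 4\,\Psi_n(T,\Delta)$. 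Absorbing this universal factor into $\kappa(\gamma,\eps)$ (i.e.\ enlarging that constant) gives the first claim.

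\textbf{Step 3 (second assertion) and the main difficulty.} If in addition $\Delta^2\ge\sigma^2T/n$ as in \eref{cond-delta}, then $\frac{\sigma^2T}{n\Delta^2}\le 1$, so $\Psi_n(T,\Delta)=\frac{\sigma^2}{n\Delta^2}$, and the second bound is merely the first one written in this regime; the parameter $\lambda$ in \eref{cond-delta} does not enter this conclusion (it is there for later use, when the bound is fed into the Lepski selection of $T$), so nothing further is needed. There is essentially no obstacle here: all the probabilistic content sits in Proposition \ref{beta}, and the present theorem is pure bookkeeping. The one point deserving (minor) care is that passing from $\Delta_T$ to $\Delta$ must not degrade the rate by more than a constant — which is exactly guaranteed by the comparison $\Delta_T^2\ge\Delta^2/2$ of Step 1, and is why the threshold $8L^2T^{-2s}$ appears in the hypothesis. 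One should also keep in mind that the constants $\kappa(\gamma,\eps),c(\gamma,\eps)$ in the statement need not be those of Proposition \ref{beta}; they may change from line to line, which is what makes Steps 2--3 go through cleanly.
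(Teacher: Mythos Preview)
Your proof is correct and matches the paper's own approach: the paper presents Theorem \ref{T,s} as ``an immediate consequence of Proposition \ref{beta}'', with the key comparison $\Delta_T^2\ge\Delta^2/2$ under $\Delta^2\ge 8L^2T^{-2s}$ spelled out in the remark preceding the theorem, exactly as in your Step~1. Your Steps~2--3 then just track the constants, and your observation that $\lambda$ in \eref{cond-delta} plays no role in the conclusion is accurate.
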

Optimizing Condition \eref{cond-delta} in $T$   leads to { 
 $T_{opt}\sim T_s:= \left(\dfrac {8L^2n}{\sigma^2}\right)^{\frac1{1+2s}}$}.
\begin{corollary}\label{cor:1}
Under the conditions above, 
for any $\gamma >0$, there exist  constants $\kappa(\gamma,\eps)$ and $c(\gamma,\eps)$ such that, if 
\begin{equation*} 
\Delta^2\ge \left[2c(\gamma,\eps)\frac{\sigma^2\ln (n)}{n}\vee\left(\frac{\sigma^2}n\right)^{\frac{2s}{1+2s}}\left(8L^2\right)^{\frac1{1+2s}} \right],\label{cond-delta1}
\end{equation*} 
$$P\Big(|\hat \tau(T_s)-\tau|\ge \kappa(\gamma,\eps)\frac{\sigma^2\ln(n)}{n\Delta^2}\Big)\le n^{-\gamma}.$$

\end{corollary}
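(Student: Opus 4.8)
The corollary follows immediately from Theorem~\ref{T,s} by substituting the explicit choice $T = T_s$ and verifying that the resulting condition on $\Delta^2$ matches \eqref{cond-delta1}. So my plan is essentially a bookkeeping argument.

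First I would recall the second part of Theorem~\ref{T,s}: with $\lambda \geq \kappa(\gamma,\eps)\ln(n)$, provided
\begin{equation*}
\Delta^2 \ge \left[2c(\gamma,\eps)\frac{\sigma^2\ln(n)}{n} \vee 8L^2 T^{-2s} \vee \frac{\sigma^2 T}{n}\right],
\end{equation*}
one has $P(|\hat\tau(T)-\tau| \ge \kappa(\gamma,\eps)\frac{\sigma^2\ln(n)}{n\Delta^2}) \le n^{-\gamma}$. Then I would plug in $T = T_s = (8L^2 n/\sigma^2)^{1/(1+2s)}$ and simply compute the two $T$-dependent terms. For the bias-type term, $8L^2 T_s^{-2s} = 8L^2 (8L^2 n/\sigma^2)^{-2s/(1+2s)} = (8L^2)^{1-2s/(1+2s)} (\sigma^2/n)^{2s/(1+2s)} = (8L^2)^{1/(1+2s)}(\sigma^2/n)^{2s/(1+2s)}$. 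For the variance-type term, $\frac{\sigma^2 T_s}{n} = \frac{\sigma^2}{n}(8L^2 n/\sigma^2)^{1/(1+2s)} = (\sigma^2/n)^{1-1/(1+2s)}(8L^2)^{1/(1+2s)} = (\sigma^2/n)^{2s/(1+2s)}(8L^2)^{1/(1+2s)}$. The two terms coincide, which is exactly why $T_s$ is the optimizing choice, and both equal $(\sigma^2/n)^{2s/(1+2s)}(8L^2)^{1/(1+2s)}$, the second term appearing inside the max in \eqref{cond-delta1}. Hence the hypothesis of Theorem~\ref{T,s} reduces precisely to the hypothesis \eqref{cond-delta1} of the corollary, and the conclusion is identical.

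The one point requiring a word of care is that $T_s$ as written need not be an integer, nor a priori at most $d$; I would note that one should read $T_s$ as, say, $\lceil T_s \rceil \wedge d$ (or that it is implicitly assumed $d \ge T_s$, consistent with the high-dimensional regime under consideration), and observe that rounding only changes the two $T$-dependent terms by a bounded multiplicative factor, which can be absorbed into the constants $\kappa(\gamma,\eps)$ and $c(\gamma,\eps)$ — this is the only mildly non-trivial step, and it is cosmetic. Everything else is substitution into an already-proved theorem, so there is no real obstacle here; the content of the corollary is entirely in the optimization remark preceding it, namely that $T \mapsto \max(8L^2 T^{-2s}, \sigma^2 T/n)$ is minimized (up to constants) at $T_s$ where the decreasing and increasing branches balance.
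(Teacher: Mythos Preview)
Your proposal is correct and matches the paper's approach exactly: the paper itself gives no separate proof of the corollary, merely noting in the sentence preceding it that optimizing condition \eqref{cond-delta} in $T$ yields $T_{opt}\sim T_s=(8L^2 n/\sigma^2)^{1/(1+2s)}$, which is precisely the substitution-and-balance computation you carry out. Your treatment is in fact more explicit than the paper's, and your remark on rounding $T_s$ to an integer in $\{1,\dots,d\}$ is a sensible caveat that the paper leaves implicit.
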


\begin{rem}

\begin{enumerate}
	\item We see here that there is an advantage in smoothing since it allows to obtain the best rate with less restricting conditions on the gap $\Delta$.

\item  We see that the greater the parameter $\Delta^2 $, the faster the rate of convergence of $\hat \tau$, which is natural, since $\Delta^2 $ corresponds to the Euclidean distance between the two means $\theta^+$ and $\theta^-$ and the segmentation task is obviously easier when groups are well-separated.

\item  At first sight, the rate of convergence and the  conditions could seem quite unsatisfactory, but observe that very often 
$\sigma^2 $ is of the form $\frac{\sigma_0^2}d$.
 In this case,
 the rate of convergence is of the order $$ {\left(\frac {nd}{\sigma_0^2}\right)^{\frac{-2s}{1+2s}}\Delta^{-2}}.
$$

\item  If we now look for a procedure searching for an optimal $T$ in an adaptive way (without knowing the regularity $s$), some remarks can be made before giving a solution. In particular, one may ask whether it is possible to optimize individually (on each signal $Y_j$ of $\R^d$), or if it is  necessary to perform an off-line preprocessing (requiring the use of all the signals). The form of the optimal smoothing 
$T_s\sim\left(\frac {nd}{\sigma_0^2}\right)^{\frac1{1+2s}}$ allows to answer this question, 
proving that any adaptive smoothing performed individually on each signal $Y_j$ (thresholding, lasso... ) 
 would lead instead to an optimal smoother of the form:
${T_{opt}= \left(\frac {d}{\sigma_0^2}\right)^{\frac1{1+2s}}}$,
inevitably creating  in the rates a  loss of a polynomial factor in $n$.
This  means that it is certainly more efficient to find a procedure performing the  smoothing  globally (off-line).
\end{enumerate}

\end{rem}

\section{Adaptive choice of $T$}

\subsection{Lepski's procedure} To begin with, let us recall the classical Lepski procedure (see \cite{Lep1, Lep2, Lep3}). 
In the standard Gaussian white noise model, 
\begin{equation}
Z_j= \beta_j+\eps_j,\; j=1,\ldots,d, \label{pseudo}
\end{equation} where the $\eps_j$'s are i.i.d. $\Nr(0, \nu^2)$,
a standard choice for the smoothing parameter $T$ consists in defining $\hat{T}$ as follows:

\begin{equation*}
\hat T:= \min\left\{k\geq 1:  \forall d  \ge j\ge m\ge k, \sum_{\ell=m}^j (Z^\ell)^2\le C_{\mathcal L}j\nu^2\ln (d\vee n) \right\},\label{Lepski}
\end{equation*}
where $C_{\mathcal L}$ is a tuning constant of the procedure.

\subsection{Preprocessing}
Here, we will use Lepski's procedure in a special case. 

First, using the complete data set (so off-line), we will create a surrogate data vector, estimating a parameter $\beta$ of regularity $s$. These data will be used to find an optimal $\hat T$.

Of course, it is known  that estimating the regularity of a signal is impossible without important extraneous assumptions, but what adaptive procedures are producing ---and especially in this case Lepski's procedure--- is a smoothing parameter $\hat T$ which, with overwhelming probability will be  smaller than the optimal one $T_s$ (defined above). This is not enough when one wants to estimate the regularity $s$ (unless extraneous assumptions are imposed). However, fortunately,  Lepski's procedure  also controls the bias of the procedure,  assuring that $\Delta^2-\Delta_{\hat T}$ is still reasonable, which is precisely the need here.

\subsubsection{Surrogate data}
For the sake of simplicity, we consider that $n$ is even; otherwise, the modifications are elementary.

Let us consider the following vector:
$$Z_j= \frac 1n \sum_{i=1}^nY_{i,j}- \frac 2n \sum_{i=1}^{n/2}Y_{i,j},\quad j=1,\ldots, d.$$

It is easy to see that this model is a special case of \eref{pseudo}
with 
\begin{align*}
&\beta_j= (1-\tau)(\theta_j^+-\theta_j^-)\I_{\{\tau\ge 1/2\}}+ \tau(\theta_j^+-\theta_j^-) \I_{\{\tau< 1/2\}}, \\ &\eps_j=\sum_{i=1}^{n/2}\frac{-1}n\eta_{i,j}+\sum_{i=n/2+1}^n \frac{1}n\eta_{i,j}, \\ &\nu^2= \frac{\sigma^2}n.
\end{align*}

We consider the Lepski procedure applied to the vector $Z$, producing a smoothing parameter $\hat T$. This smoothing parameter is then just plugged in the $k$-means procedure for estimating $\hat \tau$.

The following theorem states that the method leads to an optimal selection, up to logarithmic terms  (same convergence rate as in Corollary \ref{cor:1} with $T_s$).

\begin{theorem}\label{hatT}

In the model \eref{eq:model},  we assume that  $\theta^+$ and $\theta^-$  belong to  $\Theta(s,L)$. We suppose that there exists a constant $\alpha>0$ such that
 $$\frac n {\sigma^{2}}\ge \alpha \ln d.$$
We set 
$$\hat T:= \min\left\{k\geq 1:  \forall d  \ge j\ge m\ge k, \sum_{\ell=m}^j (Z^\ell)^2\le C_{\mathcal L}j\frac{\sigma^2}{n}\ln (d\vee n) \right\}.$$
Then, for any $\gamma >0$, there exist  constants $\kappa(\gamma,\eps)$ and $c(\gamma,\eps)$ and $R$ such that, 
if
$$ 
\Delta^2\ge 2c(\gamma,\eps)\frac{\sigma^2\ln (n)}{n}\vee  R\left(\frac{\sigma^2\ln (d\vee n)}{n}\right)^{\frac{2s}{1+2s}},$$  
then 
$$P\Big(|\hat \tau(\hat T)-\tau|\ge \kappa(\gamma,\eps)\frac{\sigma^2\ln(n)}{n\Delta^2}\Big)\le n^{-\gamma}.$$

\end{theorem}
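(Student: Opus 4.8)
The plan is to combine the deterministic guarantees of Lepski's procedure applied to the surrogate vector $Z$ with Theorem~\ref{T,s}, the point being that $\hat T$ behaves — with overwhelming probability — like the oracle smoothing level $T_s$, both in the sense that $\hat T \lesssim T_s$ (up to logarithmic factors) and in the sense that the bias it incurs, $\Delta^2 - \Delta^2_{\hat T}$, stays controlled. First I would record the standard concentration facts for $Z$: since $\eps_j \sim \Nr(0,\sigma^2/n)$ independently, a union bound over the $O(d^2)$ pairs $(m,j)$ gives that, on an event $\mathcal E$ of probability at least $1 - n^{-\gamma}$ (using the hypothesis $n/\sigma^2 \ge \alpha \ln d$ to absorb the $\ln d$ into $\ln(d\vee n)$), one has simultaneously $\big|\sum_{\ell=m}^j (\eps_\ell)^2 - (j-m+1)\tfrac{\sigma^2}{n}\big| \le (\text{const})\, j \tfrac{\sigma^2}{n}\ln(d\vee n)$ and a matching two-sided control of the cross term $\sum_{\ell=m}^j \beta_\ell \eps_\ell$ by Cauchy–Schwarz plus a sub-Gaussian tail. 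Note $\beta_j = c_\tau(\theta^+_j - \theta^-_j)$ with $c_\tau = \min(\tau,1-\tau) \in (\eps, 1/2)$ under [edge-out], so the tail of $\beta$ inherits the $\Theta(s,L)$ decay of $\theta^+ - \theta^-$: $\sum_{k \ge K}\beta_k^2 \le c_\tau^2 L^2 K^{-2s} \le L^2 K^{-2s}$.

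Next, on $\mathcal E$, I would run the usual Lepski bias–variance bookkeeping. On one hand, the defining inequality of $\hat T$ fails at $\hat T - 1$, which forces some pair $(m,j)$ with $j \ge m \ge \hat T - 1$ and $\sum_{\ell=m}^j (Z^\ell)^2 > C_{\mathcal L} j \tfrac{\sigma^2}{n}\ln(d\vee n)$; expanding $(Z^\ell)^2 = \beta_\ell^2 + 2\beta_\ell\eps_\ell + \eps_\ell^2$ and using the concentration bounds, this transfers into a lower bound $\sum_{\ell \ge \hat T - 1}\beta_\ell^2 \gtrsim (\hat T)\tfrac{\sigma^2}{n}\ln(d\vee n)$ provided $C_{\mathcal L}$ is chosen larger than the concentration constant. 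Combined with the regularity bound $\sum_{\ell \ge \hat T - 1}\beta_\ell^2 \le L^2(\hat T - 1)^{-2s}$, this yields $\hat T^{1+2s} \lesssim \tfrac{L^2 n}{\sigma^2 \ln(d\vee n)}$, i.e. $\hat T \lesssim \big(\tfrac{L^2 n}{\sigma^2 \ln(d\vee n)}\big)^{1/(1+2s)}$, which is $T_s$ up to a logarithmic factor — call the right-hand side $\bar T$. On the other hand, at level $\bar T$ (or rather, at any level $T \gtrsim T_s$), the inequality in the definition of $\hat T$ is satisfied on $\mathcal E$ because $\sum_{\ell \ge \bar T}\beta_\ell^2 \le L^2 \bar T^{-2s}$ is itself of order $\bar T \tfrac{\sigma^2}{n}\ln(d\vee n)$ by the choice of $\bar T$; hence $\hat T \le \bar T$ deterministically on $\mathcal E$. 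In particular $\Delta^2 - \Delta^2_{\hat T} = \sum_{j > \hat T}(\theta^+_j - \theta^-_j)^2 \le L^2 \hat T^{-2s}$, and since $\hat T \le \bar T$ this is $\lesssim \big(\tfrac{\sigma^2 \ln(d\vee n)}{n}\big)^{2s/(1+2s)} L^{2/(1+2s)}$.

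With these two facts in hand, I would invoke Theorem~\ref{T,s}. The assumed lower bound $\Delta^2 \ge 2c(\gamma,\eps)\tfrac{\sigma^2\ln n}{n} \vee R\big(\tfrac{\sigma^2\ln(d\vee n)}{n}\big)^{2s/(1+2s)}$, with $R$ chosen at least, say, $16 L^{2/(1+2s)}$ times the implied constant from the previous paragraph, guarantees on $\mathcal E$ that $\Delta^2_{\hat T} \ge \Delta^2/2$ (the bias is at most half of $\Delta^2$) and also that $\Delta^2 \ge \tfrac{\sigma^2 \hat T}{n}$ (because $\hat T \le \bar T$ and $\tfrac{\sigma^2 \bar T}{n} \asymp \big(\tfrac{\sigma^2\ln(d\vee n)}{n}\big)^{2s/(1+2s)}L^{2/(1+2s)} \le \Delta^2/R \cdot (\text{const})$), so condition \eref{cond-delta} of Theorem~\ref{T,s} holds for $T = \hat T$; moreover $\Delta^2_T \ge c(\gamma,\eps)\tfrac{\sigma^2\ln n}{n}$ as required by Proposition~\ref{beta}. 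Applying the second conclusion of Theorem~\ref{T,s} conditionally on the value of $\hat T$ gives $P\big(|\hat\tau(\hat T) - \tau| \ge \kappa(\gamma,\eps)\tfrac{\sigma^2\ln n}{n\Delta^2} \,\big|\, \mathcal E\big) \le n^{-\gamma}$, and a final union bound over $\mathcal E^c$ (adjusting $\gamma$ by a constant, absorbed by renaming) yields the claim. The main obstacle is the first, "lower bound at $\hat T-1$" direction of the Lepski analysis: one has to be careful that the failure of the defining inequality is exploited with the correct pair $(m,j)$ and that the cross term $\beta_\ell \eps_\ell$ does not swamp the signal term $\beta_\ell^2$ — this is where the precise form of the threshold $C_{\mathcal L} j \tfrac{\sigma^2}{n}\ln(d\vee n)$ and a sufficiently large choice of $C_{\mathcal L}$ relative to the concentration constants is essential, and it also quietly uses the hypothesis $n/\sigma^2 \ge \alpha\ln d$ so that the logarithmic inflation is of the benign size $\ln(d\vee n)$ rather than something worse.
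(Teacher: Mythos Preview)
Your bias-control step has the monotonicity backwards, and this is the decisive gap. You write $\Delta^2 - \Delta^2_{\hat T} \le L^2 \hat T^{-2s}$ and then claim ``since $\hat T \le \bar T$ this is $\lesssim (\sigma^2\ln(d\vee n)/n)^{2s/(1+2s)}$''. But $\hat T \le \bar T$ gives $\hat T^{-2s} \ge \bar T^{-2s}$, not $\le$; an \emph{upper} bound on $\hat T$ is useless for bounding $\hat T^{-2s}$ from above. Nothing in your argument rules out $\hat T = 1$, in which case the regularity bound yields only $\Delta^2 - \Delta^2_{\hat T} \le 4L^2$, and $\Delta^2_{\hat T} \ge \Delta^2/2$ does not follow. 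Your ``failure at $\hat T - 1$'' paragraph does not help here either: it produces a \emph{lower} bound on $\sum_{\ell \ge \hat T - 1}\beta_\ell^2$, not an upper bound, and is in any case redundant with your second argument for $\hat T \le \bar T$.

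The correct bias control --- and this is what the paper does --- comes directly from the \emph{defining} inequality of $\hat T$, not from any bound on $\hat T$ itself. By definition, for every $d \ge j \ge m \ge \hat T$ one has $\sum_{\ell=m}^j (Z^\ell)^2 \le C_{\mathcal L}\, j\, \tfrac{\sigma^2}{n}\ln(d\vee n)$. On the event $\{\hat T \le T_s\}$, take $m = \hat T + 1$, $j = T_s$: this gives $\sum_{\ell=\hat T+1}^{T_s}(Z^\ell)^2 \le C_{\mathcal L}\, T_s\, \tfrac{\sigma^2}{n}\ln(d\vee n) = C_{\mathcal L}\big(\tfrac{\sigma^2\ln(d\vee n)}{n}\big)^{2s/(1+2s)}$. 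On your concentration event this transfers to a bound of the same order on $\sum_{\ell=\hat T+1}^{T_s}\beta_\ell^2$, hence (dividing by $c_\tau^2 \ge \eps^2$) on $\sum_{\ell=\hat T+1}^{T_s}(\theta^+_\ell - \theta^-_\ell)^2$. The remaining tail $\sum_{\ell > T_s}(\theta^+_\ell - \theta^-_\ell)^2 \le 4L^2 T_s^{-2s}$ is handled by regularity at the \emph{deterministic} level $T_s$. Both pieces are then $\le \Delta^2/4$ once $R$ is large enough relative to $C_{\mathcal L}/\eps^2$ and $L^2$, giving $\Delta^2_{\hat T} \ge \Delta^2/2$. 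With this fix, the rest of your outline (upper bound $\hat T \le T_s$, then a union bound over the at most $T_s$ possible values of $\hat T$ together with Proposition~\ref{beta}/Theorem~\ref{T,s}) matches the paper's proof.
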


\section{Numerical study}
In this section, we provide  some simulations illustrating our theoretical results. 
\subsection{Rate of convergence}

In this experiment, we study the rate of convergence of the estimator $\hat{\tau}$. Let $d=20$, $T=10$, $\sigma=1$, $\tau=0.3$.
Let us consider data generated from Model \eqref{eq:model} with the means $\theta^-$ and $\theta^+$ obtained from the following distribution: $\theta^-\sim\Nr(0,\frac{1}{20j^2}))$
$\theta^+\sim\Nr(-\theta^-,10^{-4})$.

To get a first insight about the rate of convergence, we simulate 1000 times a sample of length $n$, for $n$ chosen between 20 and 4000, and plot in Figure \ref{fig:vit} the mean and median of the error $|\tau-\hat \tau|$ over the 1000 trials in function of $n$,  together with the function $n\mapsto \ln(n)\Psi_n(T,\Delta_T)$ corresponding to the theoretical rate of convergence  obtained in Proposition~\ref{beta}. Note that the rate of convergence of  $|\tau-\hat \tau|$ is given in the proposition up to a constant $\kappa(\gamma,\eps)$. Nevertheless,  the figure provides an appropriate illustration of the result as soon as $n$ is large enough.

\begin{figure}[H]
	\includegraphics[width=0.49\textwidth]{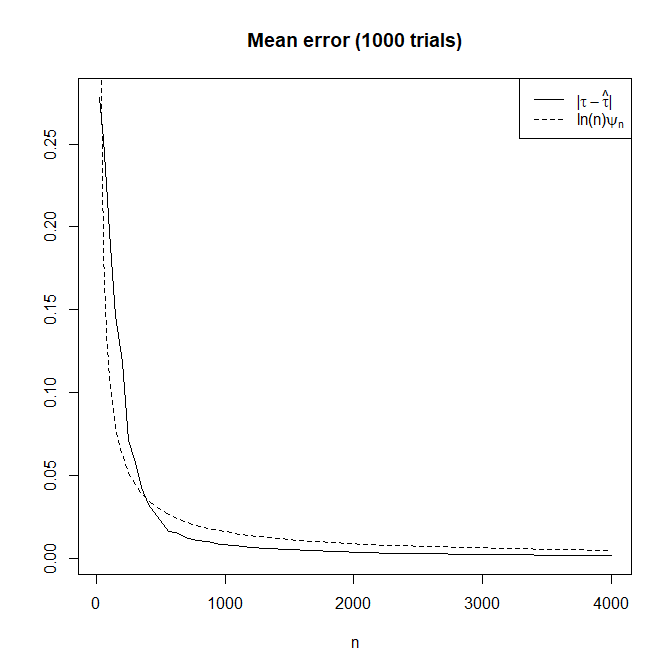}
	\includegraphics[width=0.49\textwidth]{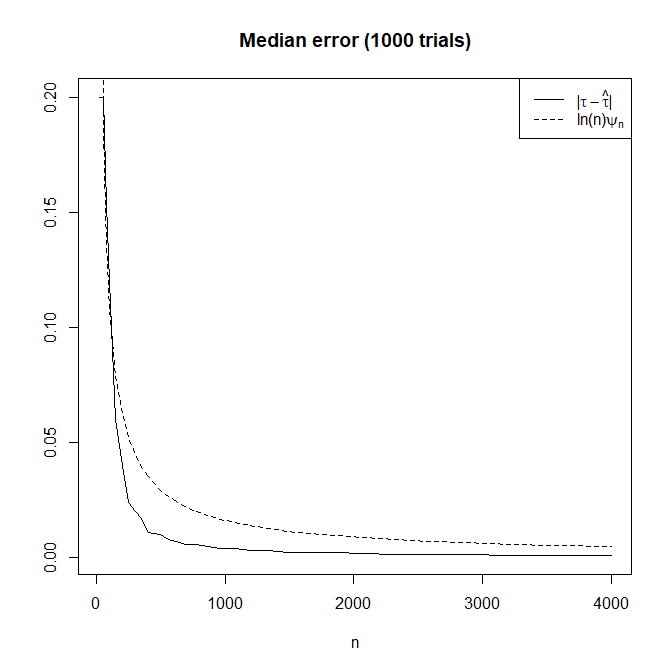}
	\caption{Plot of $|\hat\tau-\tau|$ as a function of $n$ (mean over 1000 trials).}\label{fig:vit}
\end{figure}

\begin{figure}[H]
	\includegraphics[width=0.49\textwidth]{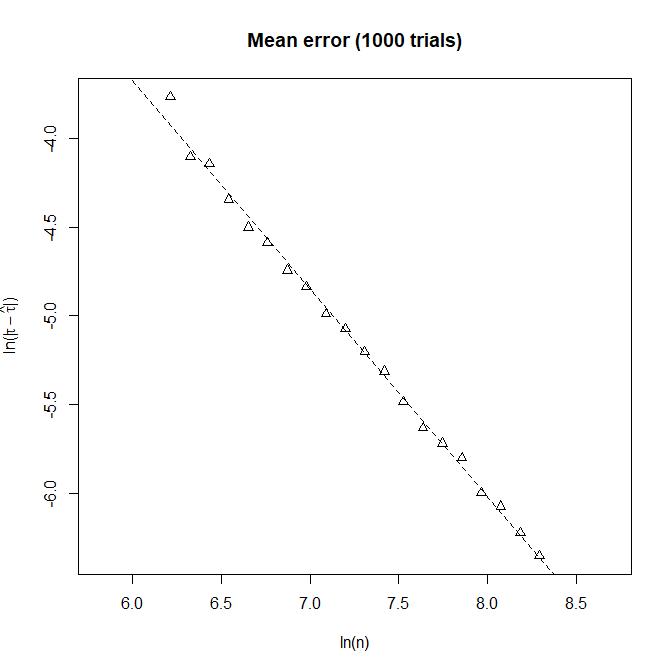}
	\includegraphics[width=0.49\textwidth]{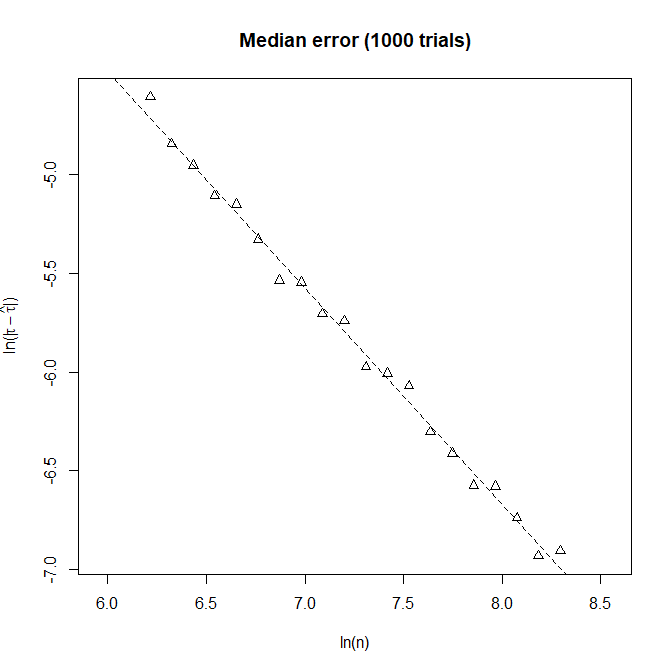}
	\caption{Plot of $\ln(|\hat\tau-\tau|)$ as a function of $\ln(n)$ (mean and median over 1000 trials).}\label{fig:vitreg}
\end{figure}

Then, simulating 1000 samples, for each value of the sample size $n$  between 500 and 4000, we try to estimate of the rate of convergence by computing the linear regression of $|\tau-\hat \tau|$ by $\ln(n)$: omitting the logarithmic factor, an exponent $-1$ is to be found, corresponding to the rate of convergence  $\frac 1n$.
Figure \ref{fig:vitreg} provides an illustration of this linear regression, considering again the mean and the median over the 1000 trials. On this example, the estimated slope of the regression line is $-1.172$ for the mean and $-1.098$ for the median.

\subsection{Selection of $T$}

In Theorem \ref{hatT}, we suggest to select $T$ using the Lepski method.  Before introducing a practical procedure for the selection of $T$, let us illustrate the fact that the performance of the estimator $\hat{\tau}$ may indeed vary a lot as a function of $T$, so that selecting the right $T$ is a crucial issue in the estimation of $\tau$.

We set $d=200$, $n=100$, $\sigma=1$, $\tau=0.3$.
We consider  data generated from Model \eqref{eq:model} with means $\theta^-$ and $\theta^+$ built as follows:
\begin{itemize}
	\item Case $A$: $\theta^-\sim\Nr(0,V)$, $\theta^+\sim\Nr(0,V)$, $V=\mbox{diag}(v_1,\dots,v_d),$ $v_j= \frac{1}{2j^2}$ for $j=1,\dots,d$.
	\item Case $B$: $\theta^-$ is such that $\theta^-_j\sim\Nr(0,1/2)$ for $j=1,\dots,20$, and $\theta^-_j\sim\Nr(0,\frac{1}{2(j-20)^2})$ for $j=21,\dots,d$.
	$\theta^+$ is such that $\theta^+_j~\Nr(\theta^-_j,10^{-2}))$ for  $j=1,\dots,20$, and $\theta^+_j\sim\Nr(0,\frac{1}{2(j-20)^2})$ for $j=21,\dots,d$.
\end{itemize}

We simulated 5000 data sets according to Model \eqref{eq:model} in  each of the two cases. Figure \ref{fig:fctTA} and \ref{fig:fctTB} show the mean and median error $|\hat\tau-\tau|$ over the 5000 trials in function of $T$. In the first case, the best result is obtained already with $T=1$, whereas for the second, taking $T$ around 30 is a good choice.
\begin{figure}[H]
	\includegraphics[width=0.49\textwidth]{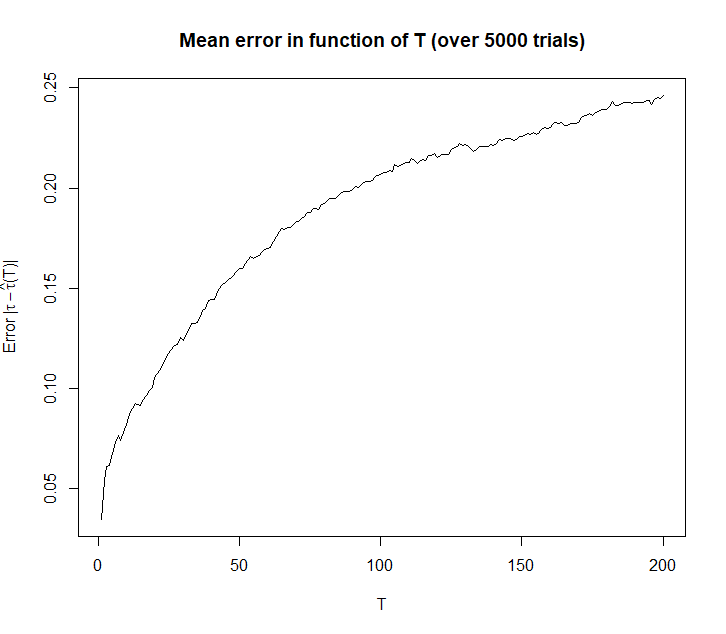}
	\includegraphics[width=0.49\textwidth]{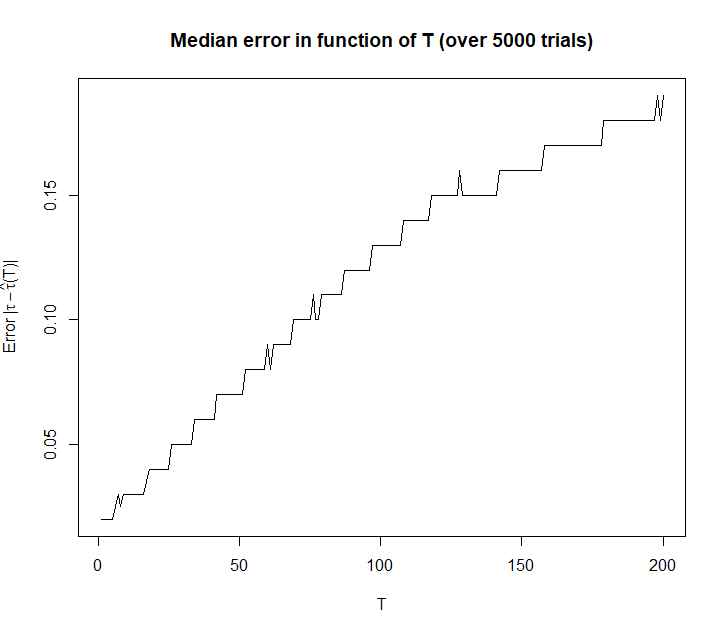}

	\caption{Mean and median of the error over 5000 trials for Model $A$.}\label{fig:fctTA}
\end{figure}

\begin{figure}[H]
	\includegraphics[width=0.49\textwidth]{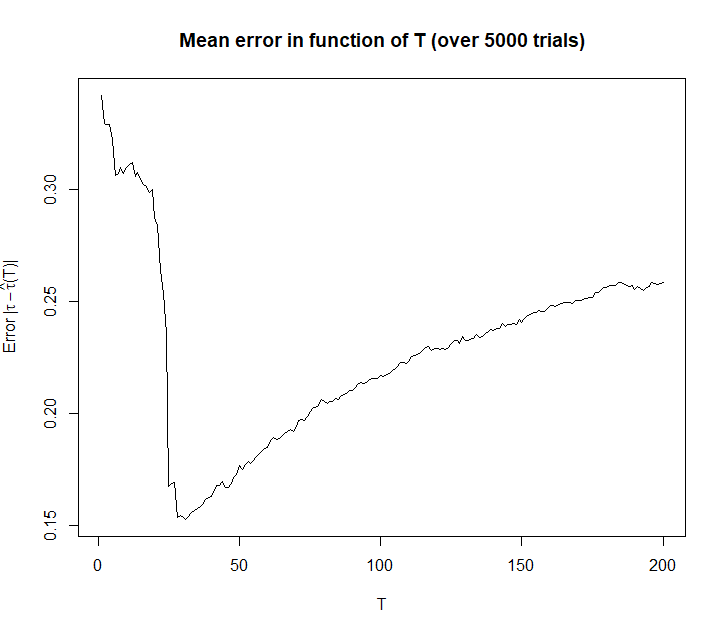}
	\includegraphics[width=0.49\textwidth]{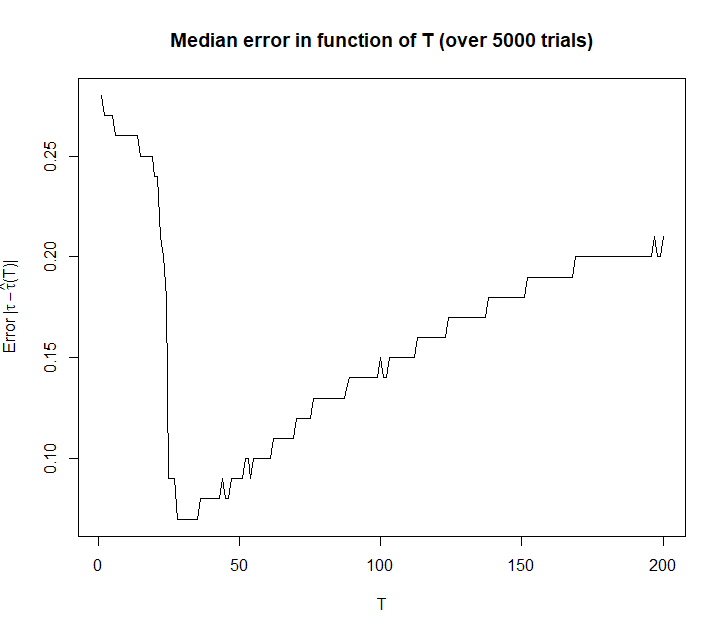}

	\caption{Mean and median of the error over 5000 trials for Model $B$.}\label{fig:fctTB}
\end{figure}

Theorem \ref{hatT} provides a theoretical way to   select  $T$. However, since the statement depends on an unknown tuning constant  $C_\mathcal L$, the theorem cannot be used directly  for choosing $T$ in practice.
In the sequel, two selection procedures for $T$ are investigated, yielding two estimators $\hat T_1$ and $\hat T_2$.
\begin{itemize}
	\item Method 1.  This method is often used to replace the search of tuning constants in adaptive methods. The idea is instead to find a division of the set $\{1,\ldots,d\}$ into $\{1,\ldots,\hat T_1\}$ and its complementary, where  the two subsets are corresponding to 2 ``regimes'' for the data, one of ``big coefficients'', one of small ones.
	\\
	Let $\bar Z^{(T)}=\frac1 T\sum_{j=1}^T Z_j$ and $\bar Z^{(-T)}=\frac1{d-T}\sum_{j=T+1}^d Z_j,$ and consider  $$V(T)=\sum_{j=1}^T (Z_j-\bar Z^{(T)})^2 +\sum_{j=T+1}^d (Z_j-\bar Z^{(-T)})^2.$$
	This quantity $V$ is computed for every $T=1,\dots,d$ and the value $\hat T_1$ is chosen such that $$\hat T_1\in\arg\min_{T=1,\dots,d} V(T).$$ Indeed, this $k$-means-like procedure, by searching for a change-point along $Z_1,\dots, Z_d$, should separate the first most significative  differences $\theta^-_j-\theta^+_j$, $j=1,\dots,\hat T_1$, from the remaining ones, expected to be less significative for estimating $\hat \tau$, in such a way  that keeping for the estimation all components until $\hat T_1$ seems  a reasonable choice.

	\item Method 2. The second idea is more computationally involved and based on subsampling. When performing subsampling, the indices drawn at random are sorted, so that the parameter of interest $\tau$ remains indeed approximatively unchanged. For each $T=1,\dots,d$, we compute  $\hat{\tau}(T)$ for a collection of subsamples. Then, $\hat T_2$ is set to the value of $T$ minimizing the variance of $\hat{\tau}$ over all subsamples. Here, 100 subsamples are built, each of them containing $80\%$ of the initial sample.

\begin{rem}
	
Proportions of data from $50\%$ to $90\%$ have also been tried, with quite similar results. Observe that picking a quite  small proportion of data for subsampling could be interesting  since it provides more variability between the subsamples, but, at the same time, the fact that the ratio between the dimension $d$ and the sample size is modified may be annoying when the aim is to select $T$. We also considered a version of subsampling where a different subsampling index is drawn for every $T=1,\dots,d$: again, this provides more variability in the subsamples, but $\tau$ may also vary more than in the classical version. The results were not significantly different.
\end{rem}

\end{itemize}
The performance of the two methods is compared with the result obtained using the value of $T$ minimizing the average value of $|\tau-\hat\tau(T)|$ over a large number of trials, called hereafter oracle $T^\star$ (here, $T^\star=30$ as obtained above for 5000 trials). Of course, $T^\star$ is not available in practice, since it depends on the true $\tau$. However, it  is introduced as a benchmark. The results, corresponding to 1000 trials, are shown in Figure \ref{fig:comp} and Table \ref{tab:comp}. Observe that the performances of the two methods are very similar, with a slight advantage of Method 2 over Method 1. However, Method 2 is based on subsampling, and, as such, is more CPU-time consuming.

\begin{figure}[H]\centering
	\includegraphics[width=0.7\textwidth]{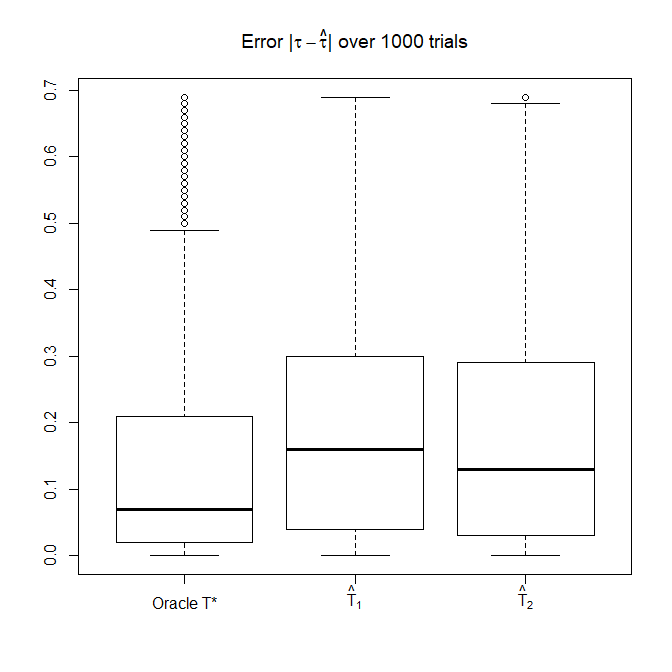}
	\caption{Error of the two selection procedures over 1000 trials, compared with the error obtained using the oracle $T^\star=30$.}\label{fig:comp}
\end{figure}

\begin{table}[H]\centering
	\begin{tabular}{|l|l|l|l|}\hline
Error over 1000 trials	&Oracle $T^\star$ &$\hat{T}_1 $&$ \hat{T}_2$\\\hline
Mean&0.1524&0.2207& 0.2047\\
 (Standard deviation)&(0.18735)&(0.21329)&(0.20841)\\\hline
\end{tabular}
\caption{Mean and standard deviation  over 1000 trials of the error obtained with the oracle $T^\star$ and the two selection methods.}\label{tab:comp}
\end{table}

\section{Proofs}
\subsection{Proof of Proposition \ref{T,s}}
Our proof will heavily rely on standard concentration inequalities, detailed in Appendix (see Section \ref{conc-ineq}).

In the sequel, for the sake of simplicity, we will assume that, additionally to $\eps<\tau<1-\eps$,  $n\tau\in\{2,\ldots,n-2\}$. This will not have any consequence on the result but will avoid unnecessary integer parts.

Also, in this proof, $\Psi_n(T,\Delta)$ will be replaced by $\Psi_n$, when there is no possible confusion.

Let us denote by $P_{(\tau, \theta^+,\theta^-)} $ the probability distribution associated with model \ref{eq:model}.
We will consider the behavior of our estimators under the probability
$P_{(\tau, \theta^+,\theta^-)} $.
 Using the notation $  x^{+}=(x^{+}_1,\ldots,x^{+}_T)$, and $x^{-}=(x^{-}_1,\ldots,x^{-}_T)$, observe that $\hat\tau$ may be defined in the following way:
\begin{multline*}
\hat \tau(T)= \frac 1 n
\argmin_{k\in \{2,\ldots ,n-2\} }
\left\{
\sum_{i=1 }^{k}\sum_{j=1}^T\Big(Y_{i,j}-\frac 1{k}\sum_{i=1}^ {k}Y_{i,j}\Big)^2\right.\\\left.+
\sum_{i=k+1}^n\sum_{j=1 }^T\Big(Y_{i,j}-\frac 1{n-k}\sum_{i= k+1}^n Y_{i,j}\Big)^2\right\}\\
=\argmin_{t\in \{\frac 2n,\ldots ,\frac{n-2}{n} \}}K^T(t).
\end{multline*}

where
 $$K^T(t) =
 \min_{x^-,x^+} L(t,x^-,x^+)-L(\tau,0,0).$$
Here, the function $L$ is given (for $t\in \{\frac 2n,\ldots ,\frac{n-2}{n} \}$) by
$$
L(t,x^-, x^+) =
\sum_{i=1}^ {nt}\sum_{j=1}^T(Y_{i,j}-\theta^{-}_j-x^-_j)^2
+
\sum_{i= nt+1}^n\sum_{j=1}^{T}(Y_{i,j}-\theta^{+}_j-x^+_j)^2.
$$
 Note that $$\frac{dP_{(t,\theta_{+}+x^+,\theta_{-}+x^-)}}{dP_{(\tau,\theta_{+},\theta_{-})}}=\exp\left(-\frac1{2\sigma^2}(L(t,x^-, x^+) -L(\tau,0,0))\right).$$
 Let us consider the case  $t\ge \tau$. The other case can be treated in a symmetrical way.
 
 For $t\ge \tau$, and under  the distribution  $P_{(\tau, \theta^+,\theta^-)} $, we may write
\begin{align*} 
L(t,x^-, x^+) &-L(\tau,0, 0)\\
&=
\sum_{i=1}^ {n\tau}\sum_{j=1}^ T((x^-_j)^2-2\eta_{i,j}x^-_j)+\sum_{i=nt+1}^{n}\sum_{j=1}^T((x^+_j)^2-2\eta_{i,j}x^+_j)\\
&\quad+\sum_{i=n\tau+1}^{nt}\sum_{j=1}^T\left((\theta^+_j-\theta^-_j-x^-_j)^2+2\eta_{i,j}(\theta^+_j-\theta^-_j-x^-_j)\right)
\\ &=
\sum_{i=1}^ {n\tau}\sum_{j=1}^ T((x^-_j)^2-2\eta_{i,j}x^-_j)+\sum_{i=nt+1}^{n}\sum_{j=1}^T((x^+_j)^2-2\eta_{i,j}x^+_j) \\
&\quad+\sum_{i=n\tau+1}^{nt}\sum_{j=1}^T\left((\delta_j-x^-_j)^2+2\eta_{i,j}(\delta_j-x^-_j)\right),\end{align*}
Hence, 
\begin{align*}
L(t,x^-, x^+) &-L(\tau,0, 0)\\&=\sum_{i=nt+1}^{n}\sum_{j=1}^T((x^+_j)^2-2\eta_{i,j}x^+_j)
+\sum_{i=1}^ {nt}\sum_{j=1}^ T((x^-_j)^2-2\eta_{i,j}x^-_j)\\&\quad+\sum_{i=n\tau+1}^{nt}\sum_{j=1}^T\left(\delta_j^2-2\delta_jx^-_j+2\delta_j\eta_{i,j}\right),
 \end{align*}
 where  $\delta=(\delta_1,\dots,\delta_T)$ is the vector $\theta^{+}-\theta^{-}$.
  Now, we have to minimize in $(x^-, x^+)$ this expression
 \begin{multline*}
 \sum_{i=nt+1}^{n}\sum_{j=1}^T((x^+_j)^2-2\eta_{i,j}x^+_j)
 +\sum_{i=1}^ {nt}\sum_{j=1}^ T((x^-_j)^2-2\eta_{i,j}x^-_j)\\+\sum_{i=n\tau+1}^{nt}\sum_{j=1}^T\left(\delta_j^2-2\delta_jx^-_j+2\delta_j\eta_{i,j}\right).
 \end{multline*}

 The minimum  is attained  by taking, for every $j$, 
 \begin{align*}
 \hat x^+_j&=\frac{\sum_{i=nt+1}^n\eta_{i,j}}{n-nt},\\
 \hat x^-_j&=\frac{\sum_{i=1}^{nt}\eta_{i,j} +(nt-n\tau)\delta_j}{nt}.
 \end{align*}
  So, the minimum is
  \begin{multline*}
 K^T(t)=\sum_{j=1}^T\left(-\frac{\left(\sum_{i=nt+1}^n\eta_{i,j}\right)^2 }{n-nt}-\frac{\left(\sum_{i=1}^ {nt}\eta_{i, j}+(nt-n\tau)\delta_j\right)^2}{nt}\right.\\\left.+(nt-n\tau)\delta_j^2+2\delta_j\sum_{i=n\tau+1}^{nt}\eta_{i,j}\right).
 \end{multline*}

Under $P_{(\tau, \theta^+,\theta^-)} $, $K^T(t)$ can be written in the following way:
$$
K^T(t) = -\sum_{j=1}^T \sigma^2V_j^2(t)-\sum_{j=1}^T \sigma^2W_j^{2}(t)+\sum_{j=1}^T \delta_j^2\frac{(nt-n\tau )n\tau}{nt}+2N_1(t)-2N_2(t),
$$
where 
\begin{align*}
\sigma^2V_j^2(t)&=\frac{\left(\sum_{i=nt+1}^n\eta_{i,j}\right)^2}{n-nt},\\
\sigma^2W_j^{2}(t)&=\frac{\left(\sum_{i=1}^ {nt}\eta_{i,j}\right)^2}{nt},\\
N_1(t)&=\sum_{j=1}^T\sum_{i=n\tau+1}^{ nt}\eta_{i,j}\delta_ j,\\
N_2(t)&=\sum_{j=1}^T\frac{\sum_{i=1}^{ nt}\eta_{i,j}(nt-n\tau)\delta_ j}{nt}\\
N_1(\tau)&=N_2(\tau)=0.
 \end{align*}
Observe that $V_j^2(t)$, $j=1,\dots,T$, are independent $\chi^2(1)$ random variables,  as well as 
$W_j^{2}(t)$, $j=1,\dots,T$. Moreover,   $N_1(t)\sim \Nr\left(0,\sum_{j=1}^T\sigma^2(nt-n\tau)\delta_j^2\right)$, 
$N_2(t)\sim \Nr\left(0,\sum_{j=1}^T\frac{\sigma^2(nt-n\tau)^2\delta_j^2}{nt}\right)$.

We have
\begin{align*} 
	&P\left(|\hat \tau-\tau|\ge \lambda \Psi_n\right)
	\\
	&= P\left(\inf_{|\frac kn-\tau|\ge \lambda \Psi_n}K^T\left(\frac kn\right)<
	\inf_{|\frac kn-\tau|< \lambda \Psi_n}K^T\left(\frac kn\right)\right)
	\\
	&\leq P\left(\inf_{|\frac kn-\tau|\ge \lambda \Psi_n}K^T\left(\frac kn\right)<
	K^T\left(\tau\right)\right)
	\\
	&\leq P\left(\inf_{\frac kn-\tau\ge \lambda \Psi_n}K^T\left(\frac kn\right)<
	K^T\left(\tau\right)\right)+
	P\left(\inf_{\frac kn-\tau\le -\lambda \Psi_n}K^T\left(\frac kn\right)<
	K^T\left(\tau\right)\right).
\end{align*}
We will only consider the first term, the other one can be treated in a symmetrical way.
We have

\begin{align*}
	&P\left(\inf_{\frac kn-\tau\ge \lambda \Psi_n}K^T\left(\frac kn\right)<
	K^T\left(\tau\right)\right)
	\\
	&= P\left(\exists k\in \{2,\ldots, n-2\}, \; \frac kn-\tau\ge \lambda \Psi_n,\; K^T\left(\frac kn\right)<K^T\left(\tau\right)\right)
	\\
	&\le P\Bigg(\exists k\in \{n\tau+n\lambda\Psi_n,\ldots, n-2\},\Bigg. \\ &\quad-\sum_{j=1}^T V_j^2\left(\frac kn\right)-\sum_{j=1}^T W_j^{2}\left(\frac kn\right)
	+\frac{
		2N_1(\frac kn)-2N_2(\frac kn)}{\sigma^2}+\frac{n\Delta_T^2}{\sigma^2}
	\frac{(\frac kn -\tau)n\tau}{k}	\\
	&\Bigg.
	<-\sum_{j=1}^T V_j^2\left(\tau\right)-\sum_{j=1}^T W_j^{2}\left(\tau\right)+
	\frac{
		2N_1(\tau)-2N_2(\tau)}{\sigma^2}
	\Bigg)
	\\
	&\le P\left(\exists k\in \{n\tau+n\lambda\Psi_n,\ldots, n-2\}, \sum_{j=1}^T V_j^2\left(\frac kn\right)+\sum_{j=1}^T W_j^{2}\left(\frac kn\right)
	\right.
	\\
	&\left.\quad-\sum_{j=1}^T V_j^2\left(\tau\right)-\sum_{j=1}^T W_j^{2}\left(\tau\right)-\frac{
		2N_1(\frac kn)-2N_2(\frac kn)}{\sigma^2}
		>\frac{n\Delta_T^2}{\sigma^2}
	\frac{(\frac kn -\tau)n\tau}{k}
	\right),
\end{align*}

since $N_1(\tau)=N_2(\tau)=0$.
Thus, 
\begin{align*}
&P\left(\inf_{\frac kn-\tau\ge \lambda \Psi_n}K^T\left(\frac kn\right)<
K^T\left(\tau\right)\right)
\\
&\le P\Bigg(\exists k\in \{n\tau+n\lambda\Psi_n,\ldots, n-2\}, \Bigg.\nonumber
\\
&\Bigg.\quad \sum_{j=1}^T V_j^2\left(\frac kn\right)+\sum_{j=1}^T W_j^{2}\left(\frac kn\right)
-\sum_{j=1}^T V_j^2\left(\tau\right)-\sum_{j=1}^T W_j^{2}\left(\tau\right)
>\frac{n\Delta_T^2}{\sigma^2}
\frac{(\frac kn -\tau)n\tau}{2k}	\Bigg)\nonumber
\\
&\quad+ P\left(\exists k\in \{n\tau+n\lambda\Psi_n,\ldots, n-2\}, 
\frac{|
	2N_1(\frac kn)-2N_2(\frac kn)|}{\sigma^2}>\frac{n\Delta_T^2}{\sigma^2}
\frac{(\frac kn -\tau)n\tau}{2k}\label{eq:prop1sum}
\right)\nonumber\\&:=P_1+P_2.
\end{align*}

Furthermore, for the second term $P_2$, using \eref{Gaussconc}
\begin{align*}
&P\left(\exists  k\in \{n\tau+n\lambda\Psi_n,\ldots, n-2\},\; \frac{|N_1(\frac kn)-N_2(\frac kn)|}{\sigma^2}>\frac{\Delta_T^2}{\sigma^2}\frac{n(\frac kn-\tau)n\tau}{4 k}\right)
\\
&
\qquad\le \sum_{k\in \{n\tau+n\lambda\Psi_n,\ldots, n-2\}} P\left( \left|N_1\Big(\frac kn\Big)\right|>\frac{\Delta_T^2n(\frac kn-\tau)n\tau}{8 k}\right)\\&\qquad+
\sum_{k\in \{n\tau+n\lambda\Psi_n,\ldots, n-2\}}P\left( \left|N_2\Big(\frac kn\Big)\right|>\frac{\Delta_T^2n(\frac kn-\tau)n\tau}{8 k}\right)
\\
&\quad\le \sum_{k\in \{n\tau+n\lambda\Psi_n,\ldots, n-2\}}
2\exp\left(
-\frac{\left(\frac{\Delta_T^2n(\frac kn-\tau)n\tau}{8 k}\right)^2}{2n\Delta_T^2(\frac kn-\tau)\sigma^2}
\right)
+ 2\exp\left(-\frac{\left(\frac{\Delta_T^2n(\frac kn-\tau)n\tau}{8 k}\right)^2}{2\frac{n\Delta_T^2(\frac kn-\tau)^2\sigma^2}{\frac kn}}\right)
\\
&
\quad\le 2n\left[ \exp\left( - \frac{ \tau^2 n\Delta_T^2\Psi_n\lambda}{64\sigma^2}\right) +\exp\left(- \frac{\tau^2 n\Delta_T^2}{64\sigma^2}\right)\right]
\\
&
\quad\le 2n\left[ \exp\left( - \frac{ \tau^2 \lambda}
{64}\right) \wedge\exp\left( - \frac{ \tau^2\lambda T\sigma^2}{64n\Delta_T^2}\right)+\exp\left(- \frac{\tau^ 2n\Delta_T^2}{64\sigma^2}\right)\right]
\\
&
\quad\le 2n\left[\exp\left( - \frac{ \tau^2 \lambda}
{64}\right) +\exp\left(- \frac{\tau^ 2n\Delta_T^2}{64\sigma^2}\right)\right].
\end{align*}

To control the first term $P_1$, we  distinguish two situations.
We begin with investigating the case where 
$n\Delta_T^2\le 32T\sigma^2/\eps^2.$

Then, using Lemma \ref{chi-deux-MD},
\begin{align*}
&P\Bigg(\exists k\in \{n\tau+n\lambda\Psi_n,\ldots, n-2\}, \Bigg.
\\
&\quad\Bigg. \sum_{j=1}^T V_j^2\left(\frac kn\right)+\sum_{j=1}^T W_j^{2}\left(\frac kn\right)
-\sum_{j=1}^T V_j^2\left(\tau\right)-\sum_{j=1}^T W_j^{2}\left(\tau\right)>\frac{n\Delta_T^2}{\sigma^2}
\frac{(\frac kn -\tau)n\tau}{2k}	\Bigg)
\\
&\le P\Bigg(\exists k\in \{n\tau+n\lambda\Psi_n,\ldots, n-2\}, \Bigg.
\\
&\Bigg. \quad\sum_{j=1}^T \left(V_j^2\left(\frac kn\right)-1\right)+\sum_{j=1}^T\left( W_j^{2}\left(\frac kn\right)-1\right)
>\frac{n\Delta_T^2}{\sigma^2}
\frac{(\frac kn -\tau)n\tau}{4k}
\Bigg)\\&\qquad+P\left(\sum_{j=1}^T \left(V_j^2(\tau)-1\right)+\sum_{j=1}^T\left( W_j^{2}(\tau)-1\right)>\frac{n\Delta_T^2}{\sigma^2}
\frac{\lambda\Psi_nn\tau}{4n}\right).
\end{align*}
Considering the first term, we have
\begin{align*}&P\Bigg(\exists k\in \{n\tau+n\lambda\Psi_n,\ldots, n-2\}, \Bigg.
\\
&\Bigg. \qquad\sum_{j=1}^T V_j^2\left(\frac kn\right)+\sum_{j=1}^T W_j^{2}\left(\frac kn\right)
-\sum_{j=1}^T V_j^2\left(\tau\right)-\sum_{j=1}^T W_j^{2}\left(\tau\right)>\frac{n\Delta_T^2}{\sigma^2}
\frac{(\frac kn -\tau)n\tau}{2k}	\Bigg)\\
&\le 2\sum_{k\in \{n\tau+n\lambda\Psi_n,\ldots, n-2\}}P\left( \sum_{j=1}^T \left(V_j^2\left(\frac kn\right)-1\right)
>\frac{n\Delta_T^2}{\sigma^2}
\frac{(\frac kn -\tau)n\tau}{8k}
\right)
\\
&\le 2\sum_{k\in \{n\tau+n\lambda\Psi_n,\ldots, n-2\}}\exp\left(-\left(\frac{n\Delta_T^2}{\sigma^2}
\frac{(\frac kn -\tau)n\tau}{8k}\right)^2\frac1{16T}\right)
\\
&\le 2n	\exp\left(-\frac{\tau^2\lambda^2 \sigma^4T}{1024(n\Delta_T^2)^2}\right).
\end{align*}
As well, 
\begin{align*}
&P\left(\sum_{j=1}^T (V_j^2\left(\tau\right)-1)+\sum_{j=1}^T (W_j^{2}\left(\tau\right)-1)>\frac{n\Delta_T^2}{\sigma^2}
\frac{\lambda\Psi_nn\tau}{4n}
\right)\le 2	\exp\left(-\frac{\tau^2\lambda^2\sigma^4 T}{1024(n\Delta_T^2)^2}\right).
\end{align*}

The two preceding bounds lead to a bound $\exp(-\frac{\tau^2\lambda}{1024})$, in the case where $\frac{\sigma^4T}{(n\Delta_T^2)^2}\ge \frac1\lambda$.

Now let us investigate the more intricate case where $n\Delta_T^2\le 32T\sigma^2/\eps^2$ (still) but  $\frac{\sigma^4T}{(n\Delta_T^2)^2}\le \frac1\lambda$ (i.e. $\frac{(n\Delta_T^2)^2}{\sigma^4T}\ge \lambda$).

We have

\begin{align*}
&P\Bigg(\exists k\in \{n\tau+n\lambda\Psi_n,\ldots, n-2\}, \Bigg.
\\
&\Bigg.\qquad \sum_{j=1}^T V_j^2\left(\frac kn\Bigg)+\sum_{j=1}^T W_j^{2}\left(\frac kn\right)
-\sum_{j=1}^T V_j^2\left(\tau\right)-\sum_{j=1}^T W_j^{2}\left(\tau\right)>\frac{n\Delta_T^2}{\sigma^2}
\frac{(\frac kn -\tau)n\tau}{2k}	\right)
\\
&
\le  P\left(\exists k\in \{n\tau+n\lambda\Psi_n,\ldots, n-2\},  \sum_{j=1}^T V_j^2\left(\frac kn\right)-\sum_{j=1}^T V_j^2\left(\tau\right)>\frac{n\Delta_T^2}{\sigma^2}
\frac{(\frac kn -\tau)n\tau}{4k}	\right)
\\
&\quad+P\left(\exists k\in \{n\tau+n\lambda\Psi_n,\ldots, n-2\},  \sum_{j=1}^T W_j^2\left(\frac kn\right)-\sum_{j=1}^T W_j^2\left(\tau\right)>\frac{n\Delta_T^2}{\sigma^2}
\frac{(\frac kn -\tau)n\tau}{4k}	\right).
\end{align*} 
Let us compute
\begin{align*}
\sigma^2\left(W_j^2\left(\frac kn\right)- W_j^2\left(\tau\right)\right)&= \left(\sum_{i=1}^{n\tau}\eta_{i,j}\right)^2\left(\frac1k-\frac1{n\tau}\right)+\left(\sum_{i=n\tau+1}^{k}\eta_{i,j}\right)^2\frac1k
\\&\quad+\frac2k\left(\sum_{i=1}^{n\tau}\eta_{i,j}\right)\left(\sum_{i=n\tau+1}^{k}\eta_{i,j}\right)
\\
&\leq\frac{k-n\tau}k\left[\left(\sum_{i=n\tau+1}^{k}\eta_{i,j}\right)^2\frac1{k-n\tau}-\left(\sum_{i=1}^{n\tau}\eta_{i,j}\right)^2\frac1{n\tau}\right]
\\&\quad+\left|\frac2k\sum_{i=1}^{n\tau}\eta_{i,j}\sum_{i=n\tau+1}^{k}\eta_{i,j}\right|.
\end{align*}
As well,
\begin{align*}
\sigma^2\left(V_j^2\left(\frac kn\right)- V_j^2\left(\tau\right)\right)&= \left(\sum_{i=k+1}^{n}\eta_{i,j}\right)^2\left(\frac1{n-k}-\frac1{n-n\tau}\right)-\left(\sum_{i=n\tau+1}^{k}\eta_{i,j}\right)^2\frac1{n-n\tau}
\\&\quad-\frac2{n-n\tau}\left(\sum_{i=k+1}^{n}\eta_{i,j}\right)\left(\sum_{i=n\tau+1}^{k}\eta_{i,j}\right)
\\
&\leq
\frac{k-n\tau}{n-n\tau}\left[\left(\sum_{i=k+1}^{n}\eta_{i,j}\right)^2\frac{1}{n-k}-\left(\sum_{i=n\tau+1}^{k}\eta_{i,j}\right)^2\frac1{k-n\tau}\right]
\\&\quad+
\left|\frac2{n-n\tau}\sum_{i=k+1}^{n}\eta_{i,j}\sum_{i=n\tau+1}^{k}\eta_{i,j}\right|
\end{align*}

As a consequence, we get
\begin{align*}&P\left(\exists k\in \{n\tau+n\lambda\Psi_n,\ldots, n-2\}, \sum_{j=1}^T W_j^2\left(\frac kn\right)-\sum_{j=1}^T W_j^2\left(\tau\right)>\frac{n\Delta_T^2}{\sigma^2}
\frac{(\frac kn -\tau)n\tau}{4k}	\right)
\\
&\quad\le \sum_{k\in \{n\tau+n\lambda\Psi_n,\ldots, n-2\}}P\Bigg(\sum_{j=1}^T\frac{k-n\tau}k\left[\left(\sum_{i=n\tau+1}^{k}\eta_{i,j}\right)^2\frac1{k-n\tau}-\left(\sum_{i=1}^{n\tau}\eta_{i,j}\right)^2\frac1{n\tau}\right]\Bigg.\\\Bigg.&\kern10.5cm>n\Delta_T^2
\frac{(\frac kn -\tau)n\tau}{8k}	\Bigg)
\\
&\qquad+\sum_{k\in \{n\tau+\lambda\Psi_n,\ldots, n-2\}} P\left(
\bigg|\sum_{j=1}^T\frac2k\sum_{i=1}^{n\tau}\eta_{i,j}\sum_{i=n\tau+1}^{k}\eta_{i,j}\bigg|>n\Delta_T^2
\frac{(\frac kn -\tau)n\tau}{8k}	\right).
\end{align*}
As well,
\begin{align*}
&P\left(\exists k\in \{n\tau+n\lambda\Psi_n,\ldots, n-2\},  \sum_{j=1}^T V_j^2\left(\frac kn\right)-\sum_{j=1}^T V_j^2\left(\tau\right)>\frac{n\Delta_T^2}{\sigma^2}
\frac{(\frac kn -\tau)n\tau}{4k}	\right)
\\
&\le \sum_{k\in \{n\tau+\lambda\Psi_n,\ldots, n-2\}}P\Bigg(\sum_{j=1}^T
\frac{k-n\tau}{n-n\tau}\left[\left(\sum_{i=k+1}^{n}\eta_{i,j}\right)^2\frac{1}{n-k}-\left(\sum_{i=n\tau+1}^{k}\eta_{i,j}\right)^2\frac1{k-n\tau}\right]
\Bigg.\\\Bigg.&\kern10.5cm>n\Delta_T^2
\frac{(\frac kn -\tau)n\tau}{8k}	\Bigg)
\\
&\quad+\sum_{k\in \{n\tau+\lambda\Psi_n,\ldots, n-2\}} P\left(
\bigg|\sum_{j=1}^T\frac2{n-n\tau}\sum_{i=k+1}^{n}\eta_{i,j}\sum_{i=n\tau+1}^{k}\eta_{i,j}\bigg|
>n\Delta_T^2
\frac{(\frac kn -\tau)n\tau}{8k}	\right).
\end{align*}

Now,  
\begin{align*}
&\sum_{k\in \{n\tau+n\lambda\Psi_n,\ldots, n-2\}}P\Bigg(\sum_{j=1}^T\frac{k-n\tau}k\left[\left(\sum_{i=n\tau+1}^{k}\eta_{i,j}\right)^2\frac1{k-n\tau}-\left(\sum_{i=1}^{n\tau}\eta_{i,j}\right)^2\frac1{n\tau}\right]\Bigg.\\\Bigg.&\kern10.5cm>n\Delta_T^2
\frac{(\frac kn -\tau)n\tau}{8k}	\Bigg)
\\&=\sum_{k\in \{n\tau+n\lambda\Psi_n,\ldots, n-2\}}P\Bigg(\sum_{j=1}^T\frac{k-n\tau}k\left[\left(\sum_{i=n\tau+1}^{k}\eta_{i,j}\right)^2\frac1{\sigma^2(k-n\tau)}-\left(\sum_{i=1}^{n\tau}\eta_{i,j}\right)^2\frac1{\sigma^2n\tau}\right]\Bigg.\\\Bigg.&\kern10.5cm>\frac{n\Delta_T^2}{\sigma^2}
\frac{(\frac kn -\tau)n\tau}{8k}	\Bigg)
\\
&
\le
\sum_{k\in \{n\tau+n\lambda\Psi_n,\ldots, n-2\}}P\left(\sum_{j=1}^T\bigg|\left(\sum_{i=n\tau+1}^{k}\eta_{i,j}\right)^2\frac1{\sigma^2(k-n\tau)}-1\bigg|>\frac{n\Delta_T^2}{\sigma^2}
\frac{\tau}{16}	\right)
\\
&\quad+
\sum_{k\in \{n\tau+n\lambda\Psi_n,\ldots, n-2\}}P\left(\sum_{j=1}^T\bigg|\left(\sum_{i=1}^{n\tau}\eta_{i,j}\right)^2\frac1{\sigma^2n\tau}-1\bigg|>\frac{n\Delta_T^2}{\sigma^2}
\frac{\tau}{16}	\right)
\\
&\le 2n\exp\left(- \frac{(n\Delta_T^2\tau)^2}{\sigma^4 256T}\right)
\\
&
\le 2n\exp\left(- \frac{\tau^2\lambda}{256}\right).
\end{align*}
In the last two bounds, we have applied Lemma \ref{chi-deux-MD}, then used the fact that we are in the case  $\frac{(n\Delta_T^2)^2}{\sigma^4T}\ge \lambda$. 
As well, 
\begin{align*}
&\sum_{k\in \{n\tau+\lambda\Psi_n,\ldots, n-2\}}P\Bigg(\sum_{j=1}^T
\frac{k-n\tau}{n-n\tau}\left[\left(\sum_{i=k+1}^{n}\eta_{i,j}\right)^2\frac{1}{n-k}-\left(\sum_{i=n\tau+1}^{k}\eta_{i,j}\right)^2\frac1{k-n\tau}\right]\Bigg.\\\Bigg.&\kern10.5cm
>n\Delta_T^2
\frac{(\frac kn -\tau)n\tau}{8k}	\Bigg)
\\
&\le
\sum_{k\in \{n\tau+n\lambda\Psi_n,\ldots, n-2\}}P\left(\sum_{j=1}^T\bigg|\left(\sum_{i=k+1}^{n}\eta_{i,j}\right)^2\frac1{\sigma^2(n-k)}-1\bigg|>\frac{n\Delta_T^2}{\sigma^2}
\frac{(1 -\tau)n\tau}{16k}\right)
\\
&\quad+
\sum_{k\in \{n\tau+n\lambda\Psi_n,\ldots, n-2\}}P\left(\sum_{j=1}^T\bigg|\left(\sum_{i=n\tau+1}^{k}\eta_{i,j}\right)^2\frac1{\sigma^2(k-n\tau)}-1\bigg|>\frac{n\Delta_T^2}{\sigma^2}
\frac{(1 -\tau)n\tau}{16k}\right)\\&\le  2n \exp \left(-\frac{\tau^2(1-\tau)^2(n\Delta_T^2)^2}{256\sigma^4T}\right)
\\
&\le 2n\exp\left(- \frac{\tau^2(1-\tau)^2\lambda}{256}\right).	
\end{align*}

Now, let us denote by $\F$, the $\sigma-$algebra spanned by the variables $\{\eta_{i,j},\; i\le n\tau,\; j\le T\}$. We write
\begin{align*}
&\sum_{k\in \{n\tau+n\lambda\Psi_n,\ldots, n-2\}} P\left(
\bigg|\sum_{j=1}^T\frac2k\sum_{i=1}^{n\tau}\eta_{i,j}\sum_{i=n\tau+1}^{k}\eta_{i,j}\bigg|>n\Delta_T^2
\frac{(\frac kn -\tau)n\tau}{8k}	\right)
\\
&\quad=	\sum_{k\in \{n\tau+n\lambda\Psi_n,\ldots, n-2\}}E\left[P\left(\bigg|\sum_{j=1}^T\sum_{i=1}^{n\tau}\eta_{i,j}\sum_{i=n\tau+1}^{k}\eta_{i,j}\bigg|>n\Delta_T^2
\frac{(\frac kn -\tau)n\tau}{16}\Big|\F	\right)\right].
\end{align*}
Conditionally on $\F$, the random variable $\sum_{j=1}^T\sum_{i=1}^{n\tau}\eta_{i,j}\sum_{i=n\tau+1}^{k}\eta_{i,j}$ follows a centered normal distribution $\Nr(0,\sigma^2(k-n\tau)\sum_{j=1}^{T}(\sum_{i=1}^{n\tau}\eta_{i,j})^2)$, that is $$\frac{\sum_{j=1}^T\sum_{i=1}^{n\tau}\eta_{i,j}\sum_{i=n\tau+1}^{k}\eta_{i,j}}{\sigma(k-n\tau)^{1/2}(\sum_{j=1}^{T}(\sum_{i=1}^{n\tau}\eta_{i,j})^2)^{1/2}}\sim\Nr(0,1).$$
Thus,
\begin{align*}
&\sum_{k\in \{n\tau+n\lambda\Psi_n,\ldots, n-2\}} P\left(
\bigg|\sum_{j=1}^T\frac2k\sum_{i=1}^{n\tau}\eta_{i,j}\sum_{i=n\tau+1}^{k}\eta_{i,j}\bigg|>n\Delta_T^2
\frac{(\frac kn -\tau)n\tau}{8k}\right)\\
&\le 	2\sum_{k\in \{n\tau+n\lambda\Psi_n,\ldots, n-2\}}E\left[\exp\left(-\frac{(n\Delta_T^2(\frac kn -\tau)n\tau)^2}{16^2}\frac1{2\sigma^2(k-n\tau)\sum_{j=1}^T(\sum_{i=1}^{n\tau}\eta_{i,j})^2}\right)	\right]\\
&\le 2	\sum_{k\in \{n\tau+n\lambda\Psi_n,\ldots, n-2\}}E\Bigg[\exp\left(-\frac{(n\Delta_T^2(\frac kn -\tau)n\tau)^2}{16^2}\frac1{2\sigma^2(k-n\tau)\sum_{j=1}^T(\sum_{i=1}^{n\tau}\eta_{i,j})^2}\right)\Bigg.\\\Bigg.&\kern9.5cm\times	\I_{\left\{\sum_{j=1}^T\frac{\left(\sum_{i=1}^{n\tau}\eta_{i,j}\right)^2}{\sigma^2n\tau}\le 8T\right\}}\Bigg]
\\
&+	2nP\left(\sum_{j=1}^T\frac{\left(\sum_{i=1}^{n\tau}\eta_{i,j}\right)^2}{\sigma^2n\tau}\ge 8T\right)
\\
&\le 2n\exp\left(-\frac{\lambda\tau}{16^3}\right)
+ 2n\exp\left(-\frac{n\Delta_T^2\eps^2}{32\sigma^2}
\right).
\end{align*}
We used here $n\Delta_T^2\le 32T\sigma^2/\epsilon^2$ together with 
lemma \ref{chi2}.
To end the proof of this part, we investigate the last term: let now $\F_k$ denote the $\sigma-$algebra spanned by the variables $\{\eta_{i,j},\; i> k,\; j\le T\}$, and using again lemma \ref{chi2}. We write:

\begin{align*}
&\sum_{k\in \{n\tau+n\lambda\Psi_n,\ldots, n-2\}}  P\left(
\bigg|\sum_{j=1}^T\frac2{n-n\tau}\sum_{i=k+1}^{n}\eta_{i,j}\sum_{i=n\tau+1}^{k}\eta_{i,j}\bigg|>{n\Delta_T^2}
\frac{(\frac kn -\tau)n\tau}{8k}	\right)
\\
&
\le \sum_{k\in \{n\tau+n\lambda\Psi_n,\ldots, n-2\}}E\left[P\left(
\bigg|\sum_{j=1}^T\frac 1{n-n\tau}\sum_{i=k+1}^{n}\eta_{i,j}\sum_{i=n\tau+1}^{k}\eta_{i,j}\bigg|>{n\Delta_T^2}
\frac{(\frac kn -\tau)n\tau}{16k}\Big|\F_k	\right)\right]
\\
&\le 2	\sum_{k\in \{n\tau+n\lambda\Psi_n,\ldots, n-2\}}E\Bigg[\exp\left(-\frac{(n\Delta_T^2(\frac kn -\tau)(n-n\tau)n\tau)^2}{16^2k^2}\frac1{2\sigma^2(k-n\tau)\sum_{j=1}^T(\sum_{i=k+1}^{n}\eta_{i,j})^2}\right)\Bigg.
\\
&	\Bigg.\kern10cm\times\I_{\left\{\sum_{j=1}^T\frac{\left(\sum_{i=k+1}^{n}\eta_{i,j}\right)^2}{\sigma^2(n-k)}\le 8T\right\}}\Bigg]
\\
&+	2nP\left(\sum_{j=1}^T\frac{\left(\sum_{i=k+1}^{n}\eta_{i,j}\right)^2}{\sigma^2(n-k)}\ge 8T\right)
\le 2n\exp\left(-\frac{\lambda\tau^2(1-\tau)^2}{16^3}\right)
+ 2n\exp\left(-\frac{n\Delta_T^2\eps^2}{64\sigma^2}\right).
\end{align*}

We now investigate the case where   $n\Delta_T^2\ge 32T\sigma^2/\eps^2.$
Note that, as $\eps<1/2$, in this case, we also have $n\Delta_T^2\ge 64T\sigma^2/\eps.$
We have:

\begin{align*}
&P\Bigg(\exists k\in \{n\tau+n\lambda\Psi_n,\ldots, n-2\},\Bigg.\\
&\Bigg. \qquad\sum_{j=1}^T V_j^2\left(\frac kn\right)+\sum_{j=1}^T W_j^{2}\left(\frac kn\right)
-\sum_{j=1}^T V_j^2\left(\tau\right)-\sum_{j=1}^T W_j^{2}\left(\tau\right)
>\frac{n\Delta_T^2}{\sigma^2}
\frac{(\frac kn -\tau)n\tau}{2k}	\Bigg)
\\
&
\le P\left(\exists k\in \{n\tau+\lambda\Psi_n,\ldots, n-2\}, \sum_{j=1}^T V_j^2\left(\frac kn\right)-\sum_{j=1}^T V_j^2\left(\tau\right)>\frac{n\Delta_T^2}{\sigma^2}
\frac{(\frac kn -\tau)n\tau}{4k}	\right)
\\
&\quad +P\left(\exists k\in \{n\tau+n\lambda\Psi_n,\ldots, n-2\},  \sum_{j=1}^T W_j^2\left(\frac kn\right)-\sum_{j=1}^T W_j^2\left(\tau\right)>\frac{n\Delta_T^2}{\sigma^2}
\frac{(\frac kn -\tau)n\tau}{4k}	\right)
\end{align*} 
We will use the next upper bounds.
We have:
\begin{align*}
\sigma^2\left(W_j^2\left(\frac kn\right)- W_j^2\left(\tau\right)\right)&= \left(\sum_{i=1}^{n\tau}\eta_{i,j}\right)^2\left(\frac1k-\frac1{n\tau}\right)+\left(\sum_{i=n\tau+1}^{k}\eta_{i,j}\right)^2\frac1k\\&\quad
+\frac2k\left(\sum_{i=1}^{n\tau}\eta_{i,j}\right)\left(\sum_{i=n\tau+1}^{k}\eta_{i,j}\right)
\\
&\leq\left(\sum_{i=n\tau+1}^{k}\eta_{i,j}\right)^2\frac1k+\left|\frac2k\sum_{i=1}^{n\tau}\eta_{i,j}\sum_{i=n\tau+1}^{k}\eta_{i,j}\right|,
\end{align*}
as well as 
\begin{align*}
\sigma^2\left(V_j^2\left(\frac kn\right)- V_j^2\left(\tau\right)\right)&= \left(\sum_{i=k+1}^{n}\eta_{i,j}\right)^2\left(\frac1{n-k}-\frac1{n-n\tau}\right)-\left(\sum_{i=n\tau+1}^{k}\eta_{i,j}\right)^2\frac1{n-n\tau}
\\&\quad-\frac2{n-n\tau}\left(\sum_{i=k+1}^{n}\eta_{i,j}\right)\left(\sum_{i=n\tau+1}^{k}\eta_{i,j}\right)
\\
&\leq
\quad \left(\sum_{i=k+1}^{n}\eta_{i,j}\right)^2\frac{k-n\tau}{(n-k)(n-n\tau)}\\&\quad+
\left|\frac2{n-n\tau}\sum_{i=k+1}^{n}\eta_{i,j}\sum_{i=n\tau+1}^{k}\eta_{i,j}\right|.
\end{align*}

As a consequence, we get

\begin{align*}
&P\left(\exists k\in \{n\tau+n\lambda\Psi_n,\ldots, n-2\},  \sum_{j=1}^T W_j^2\left(\frac kn\right)-\sum_{j=1}^T W_j^2\left(\tau\right)>\frac{n\Delta_T^2}{\sigma^2}
\frac{(\frac kn -\tau)n\tau}{4k}	\right)
\\
&\quad\le \sum_{k\in \{n\tau+n\lambda\Psi_n,\ldots, n-2\}}P\left(\frac1{\sigma^2}\sum_{j=1}^T\left(\sum_{i=n\tau+1}^{k}\eta_{i,j}\right)^2\frac1k>\frac{n\Delta_T^2}{\sigma^2}
\frac{(\frac kn -\tau)n\tau}{8k}	\right)
\\
&\qquad+P\left(\frac1{\sigma^2}
\sum_{j=1}^T\left|\frac2k\sum_{i=1}^{n\tau}\eta_{i,j}\sum_{i=n\tau+1}^{k}\eta_{i,j}\right|>\frac{n\Delta_T^2}{\sigma^2}
\frac{(\frac kn -\tau)n\tau}{8k}	\right).
\end{align*}
As well,
\begin{align*}
&P\left(\exists k\in \{n\tau+n\lambda\Psi_n,\ldots, n-2\}, \sum_{j=1}^T V_j^2\left(\frac kn\right)-\sum_{j=1}^T V_j^2\left(\tau\right)>\frac{n\Delta_T^2}{\sigma^2}
\frac{(\frac kn -\tau)n\tau}{4k}	\right)
\\
&\le \sum_{k\in \{n\tau+n\lambda\Psi_n,\ldots, n-2\}}P\left(\frac1{\sigma^2}\sum_{j=1}^T\left(\sum_{i=k+1}^{n}\eta_{i,j}\right)^2\frac{k-n\tau}{(n-k)(n-n\tau)}>\frac{n\Delta_T^2}{\sigma^2}
\frac{(\frac kn -\tau)n\tau}{8k}	\right)
\\
&\qquad+P\left(\frac1{\sigma^2}
\sum_{j=1}^T\left|\frac2{n-n\tau}\sum_{i=k+1}^{n}\eta_{i,j}\sum_{i=n\tau+1}^{k}\eta_{i,j}\right|>\frac{n\Delta_T^2}{\sigma^2}
\frac{(\frac kn -\tau)n\tau}{8k}	\right).
\end{align*}

Now,  using lemma \ref{chi2}, we get, since $n\Delta_T^2\ge 64T\sigma^2/\eps$,
\begin{align*}
&\sum_{k\in \{n\tau+n\lambda\Psi_n,\ldots, n-2\}}P\left(\frac1{\sigma^2}\sum_{j=1}^T\left(\sum_{i=n\tau+1}^{k}\eta_{i,j}\right)^2>\frac{n\Delta_T^2}{\sigma^2}
\frac{(\frac kn -\tau)n\tau}{8}	\right)
\\
&
\quad\le
\sum_{k\in \{n\tau+n\lambda\Psi_n,\ldots, n-2\}}P\left(\frac1{\sigma^2}\sum_{j=1}^T\frac{\left(\sum_{i=n\tau+1}^{k}\eta_{i,j}\right)^2}{k-n\tau}>\frac{n\Delta_T^2}{\sigma^2}
\frac{\tau}{8}	\right)
\\
&\quad
\le n\exp\left(-\frac{n\Delta_T^2\tau}{64\sigma^2}
\right).
\end{align*}
As well, since $n\Delta_T^2\ge 32T\sigma^2/\eps^2$,

\begin{align*}
&\sum_{k\in \{n\tau+\lambda\Psi_n,\ldots, n-2\}}P\left(\frac1{\sigma^2}\sum_{j=1}^T\left(\sum_{i=k+1}^{n}\eta_{i,j}\right)^2\frac{k-n\tau}{(n-k)(n-n\tau)}>\frac{n\Delta_T^2}{\sigma^2}
\frac{(\frac kn -\tau)n\tau}{8k}\right)
\\
&\quad\le\sum_{k\in \{n\tau+\lambda\Psi_n,\ldots, n-2\}}P\left(\frac1{\sigma^2}\sum_{j=1}^T\left(\sum_{i=k+1}^{n}\eta_{i,j}\right)^2\frac{1}{(n-k)}>\frac{n\Delta_T^2}{\sigma^2}
\frac{(1 -\tau)\tau}{8}	\right)
\\
&\quad
\le n\exp\left(-\frac{n\Delta_T^2\tau(1-\tau)}{64\sigma^2}
\right).
\end{align*}
Now, let us denote by $\F$, the $\sigma-$algebra spanned by the variables $\{\eta_{i,j},\; i\le n\tau,\; j\le T\}$.
\begin{align*}
&\sum_{k\in \{n\tau+n\lambda\Psi_n,\ldots, n-2\}}P\left(
\bigg|\sum_{j=1}^T\frac2k\sum_{i=1}^{n\tau}\eta_{i,j}\sum_{i=n\tau+1}^{k}\eta_{i,j}\bigg|>n\Delta_T^2
\frac{(\frac kn -\tau)n\tau}{8k}	\right)
\\
&=	\sum_{k\in \{n\tau+n\lambda\Psi_n,\ldots, n-2\}}E\left[P\bigg(\frac1{\sigma^2}
\bigg|\sum_{j=1}^T\sum_{i=1}^{n\tau}\eta_{i,j}\sum_{i=n\tau+1}^{k}\eta_{i,j}\bigg|>\frac{n\Delta_T^2}{\sigma^2}
\frac{(\frac kn -\tau)n\tau}{16}\Big|\F	\bigg)\right]
\\
&\le 2	\sum_{k\in \{n\tau+n\lambda\Psi_n,\ldots, n-2\}}E\left[\exp\left(-\frac{(n\Delta_T^2(\frac kn -\tau)n\tau)^2}{16^2}\frac1{2\sigma^2(k-n\tau)\sum_{j=1}^T(\sum_{i=1}^{n\tau}\eta_{i,j})^2}\right)\right]
\\
&\le 2	\sum_{k\in \{n\tau+n\lambda\Psi_n,\ldots, n-2\}}E\Bigg[\exp\left(-\frac{(n\Delta_T^2(\frac kn -\tau)n\tau)^2}{16^2}\frac1{2\sigma^2(k-n\tau)\sum_{j=1}^T(\sum_{i=1}^{n\tau}\eta_{i,j})^2}\right)\Bigg.	\\\Bigg.&\kern9cm\times\I_{\left\{\sum_{j=1}^T\frac{\left(\sum_{i=1}^{n\tau}\eta_{i,j}\right)^2}{n\tau\sigma^2}\le \frac{n\Delta_T^2\tau}{8\sigma^2}\right\}}\Bigg]
\\
&+	2nP\left(\sum_{j=1}^T\frac{\left(\sum_{i=1}^{n\tau}\eta_{i,j}\right)^2}{n\tau\sigma^2}\ge \frac{n\Delta_T^2\tau}{8\sigma^2}\right)
\\
&\le 2n\exp\left(-\frac{\lambda}{64}\right)
+ 2n\exp\left(-\frac{n\Delta_T^2\tau}{64\sigma^2}
\right).
\end{align*}

To end the proof we investigate the last term: let now $\F_k$ be the $\sigma-$algebra spanned by the variables $\{\eta_{i,j},\; i> k,\; j\le T\}$. We write:

\begin{align*}
&\sum_{k\in \{n\tau+n\lambda\Psi_n,\ldots, n-2\}}  P\left(
\bigg|\sum_{j=1}^T\frac2{n-n\tau}\sum_{i=k+1}^{n}\eta_{i,j}\sum_{i=n\tau+1}^{k}\eta_{i,j}\bigg|>{n\Delta_T^2}
\frac{(\frac kn -\tau)n\tau}{8k}	\right)
\\
&
\le \sum_{k\in \{n\tau+n\lambda\Psi_n,\ldots, n-2\}}E\left[P\left(
\bigg|\sum_{j=1}^T\frac 1{n-n\tau}\sum_{i=k+1}^{n}\eta_{i,j}\sum_{i=n\tau+1}^{k}\eta_{i,j}\bigg|>{n\Delta_T^2}
\frac{(\frac kn -\tau)n\tau}{16k}\Big|\F_k	\right)\right]
\\
&\le 2	\sum_{k\in \{n\tau+n\lambda\Psi_n,\ldots, n-2\}}E\Bigg[\exp\left(-\frac{(n\Delta_T^2(\frac kn -\tau)(n-n\tau)n\tau)^2}{16^2k^2}\frac1{2\sigma^2(k-n\tau)\sum_{j=1}^T(\sum_{i=k+1}^{n}\eta_{i,j})^2}\right)	
\Bigg.
\\\Bigg.
&\kern9cm
\times\I_{\left\{\sum_{j=1}^T\frac{\left(\sum_{i=k+1}^{n}\eta_{i,j}\right)^2}{\sigma^2(n-k)}\le \frac{n\Delta_T^2\tau}{8\sigma^2}\right\}}\Bigg]
\\&\quad+	2nP\left(\sum_{j=1}^T\frac{\left(\sum_{i=k+1}^{n}\eta_{i,j}\right)^2}{\sigma^2(n-k)}\ge \frac{n\Delta_T^2\tau}{8\sigma^2}\right)
\\
&\le 2n\exp\left(-\frac{\lambda\tau(1-\tau)^2}{64}\right)
+ 2n\exp\left(-\frac{n\Delta_T^2\tau}{64\sigma^2}
\right).
\end{align*}
Summarizing the elements above, we get:
\begin{equation*}
P\left(|\hat \tau-\tau|\ge \lambda \Psi_n\right)
\le2\left[(16n+2)\exp\left(-\frac{\lambda\eps^4}{16^3}\right)
+12n\exp\left(-\frac{n\Delta_T^2\eps^2}{64\sigma^2}\right)\right].\label{final-beta}
\end{equation*}
Taking now $\lambda=\kappa(\gamma, \eps)\ln(n)$, this proves Proposition \ref{beta}.

 \subsection{Proof of Theorem \ref{hatT}}

Let us in this section define $T_s:= \left(\frac{\sigma^2\ln (d\vee n)}{n}\right)^{\frac{-1}{1+2s}}$.
The following lemma is essential in the sequel.

\begin{lemma}\label{hatTaux}

In  the model \eref{eq:model}, we assume that  $\theta^+$ and $\theta^-$ belong to $\Theta(s,L)$. We suppose that there exists a constant $\alpha>0$ such that
 $$\frac n {\sigma^{2}}\ge \alpha \ln d.$$
Then, for any $\gamma$, if $C_{\mathcal L}$ is large enough (see condition \eref{cond-c} below), there exists a constant $R=R(\gamma,L,C_{\mathcal L},\eps)$  (see condition \eref{cond-R}  such that, if
\begin{equation}
\Delta^2\ge  R\left(\frac{\sigma^2\ln (d\vee n)}{n}\right)^{\frac{2s}{1+2s}}, 
\label{cond(s)}
\end{equation}
then,   as soon as  $n$ is greater than an absolute constant, we have
\begin{equation*}\label{delta-hat-T}
P \left(\left\{\Delta_{\hat T}^2\ge \frac{\Delta^2}2\right\}\cap \{\hat T\le T_s\}\right)\ge 1-n^{-\gamma}.
\end{equation*}
\end{lemma}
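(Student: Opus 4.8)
\emph{Proof (plan).} Throughout, write $\nu^2=\sigma^2/n$ and recall that the surrogate data satisfy $Z_j=c\,\delta_j+\varepsilon_j$ with $\delta=\theta^+-\theta^-$, $c=\tau\wedge(1-\tau)\in(\eps,1/2]$ under [edge-out], and $\varepsilon_j$ i.i.d. $\Nr(0,\nu^2)$. The plan is to do everything on a single ``good'' event $\mathcal A$ controlling the noise, and to reduce the statement to $\mathcal A\subset\{\Delta_{\hat T}^2\ge\Delta^2/2\}\cap\{\hat T\le T_s\}$. Two elementary facts are used repeatedly: from $\theta^\pm\in\Theta(s,L)$ and $\delta_\ell^2\le2(\theta_\ell^+)^2+2(\theta_\ell^-)^2$ one gets $\sum_{\ell\ge K}\delta_\ell^2\le 4L^2K^{-2s}$ for all $K\ge1$; and, by the definition of $T_s$, $T_s^{-2s}=T_s\,\nu^2\ln(d\vee n)$.

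First I would fix $\mathcal A:=\{\forall\,1\le m\le j\le d:\ \sum_{\ell=m}^{j}\varepsilon_\ell^2\le C_1\,j\,\nu^2\ln(d\vee n)\}$. Since $\sum_{\ell=m}^j\varepsilon_\ell^2/\nu^2\sim\chi^2(j-m+1)$, the $\chi^2$-deviation bounds recalled in Section~\ref{conc-ineq}, the inequality $\ln(d\vee n)\ge1$, and a union bound over the at most $d^2$ pairs $(m,j)$ give $P(\mathcal A^c)\le n^{-\gamma}$ once $C_1=C_1(\gamma)$ is large enough; this is where the threshold $\ln(d\vee n)$ (rather than $\ln n$) and the hypothesis $n/\sigma^2\ge\alpha\ln d$ are used, and $C_1$ is the constant that the conditions on $C_{\mathcal L}$ and $R$ below refer to.

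For the bound $\hat T\le T_s$, I would check that on $\mathcal A$ the value $k=\lceil T_s\rceil$ satisfies the Lepski constraint (which is the meaning of $\hat T\le T_s$, the ceiling being harmless; assume $\lceil T_s\rceil\le d$): for $d\ge j\ge m\ge\lceil T_s\rceil$,
\[
\sum_{\ell=m}^{j}Z_\ell^2\ \le\ 2c^2\sum_{\ell\ge m}\delta_\ell^2+2\sum_{\ell=m}^{j}\varepsilon_\ell^2\ \le\ 2L^2m^{-2s}+2C_1\,j\,\nu^2\ln(d\vee n),
\]
and $m\ge T_s$ forces $m^{-2s}\le T_s^{-2s}=T_s\,\nu^2\ln(d\vee n)\le j\,\nu^2\ln(d\vee n)$, so the right-hand side is $\le C_{\mathcal L}\,j\,\nu^2\ln(d\vee n)$ provided
\begin{equation}\label{cond-c}
C_{\mathcal L}\ \ge\ 2L^2+2C_1 .
\end{equation}

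The main work is the bound $\Delta_{\hat T}^2\ge\Delta^2/2$. Let $T^\star$ be the smallest integer with $\Delta_{T^\star}^2\ge\Delta^2/2$ (so $T^\star\le d$ since $\Delta_d^2=\Delta^2$; if $T^\star=1$ there is nothing to prove). The set of admissible $k$ in the definition of $\hat T$ is upward closed (a larger $k$ imposes fewer constraints), hence equals $\{\hat T,\hat T+1,\dots\}$, and $T\mapsto\Delta_T^2$ is nondecreasing, so it suffices to show that on $\mathcal A$ the index $k=T^\star-1$ is \emph{not} admissible, i.e. to exhibit a block $[m_0,j_0]$ with $m_0=T^\star-1\le j_0\le d$ violating the Lepski inequality. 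I would take $j_0$ minimal with $\sum_{\ell=m_0}^{j_0}\delta_\ell^2\ge\frac12\sum_{\ell\ge m_0}\delta_\ell^2$; then $\sum_{\ell\ge m_0}\delta_\ell^2\ge\Delta^2-\Delta_{T^\star-1}^2>\Delta^2/2$ gives $\sum_{\ell=m_0}^{j_0}\delta_\ell^2>\Delta^2/4$, while minimality together with $4L^2 j_0^{-2s}\ge\sum_{\ell\ge j_0}\delta_\ell^2\ge\Delta^2/4$ gives $j_0\le(16L^2/\Delta^2)^{1/(2s)}$. Using $(a+b)^2\ge\frac12a^2-b^2$, $c>\eps$ and $\mathcal A$,
\[
\sum_{\ell=m_0}^{j_0}Z_\ell^2\ \ge\ \frac{c^2}{2}\sum_{\ell=m_0}^{j_0}\delta_\ell^2-\sum_{\ell=m_0}^{j_0}\varepsilon_\ell^2\ >\ \frac{\eps^2\Delta^2}{8}-C_1\,j_0\,\nu^2\ln(d\vee n);
\]
if $T^\star-1$ were admissible this would be $\le C_{\mathcal L}\,j_0\,\nu^2\ln(d\vee n)$, forcing $\eps^2\Delta^2/8<(C_{\mathcal L}+C_1)(16L^2/\Delta^2)^{1/(2s)}\nu^2\ln(d\vee n)$, i.e. $\Delta^2<R_0\big(\nu^2\ln(d\vee n)\big)^{2s/(1+2s)}$ for an explicit $R_0$; since raising to the power $2s/(1+2s)$ turns $16^{1/(2s)}L^{1/s}$ into $16^{1/(1+2s)}L^{2/(1+2s)}\le 16(L^2\vee1)$, the constant $R_0=R_0(\gamma,L,C_{\mathcal L},\eps)$ does not depend on $s$, and taking
\begin{equation}\label{cond-R}
R\ \ge\ R_0(\gamma,L,C_{\mathcal L},\eps)
\end{equation}
contradicts \eref{cond(s)}. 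Hence on $\mathcal A$ the index $T^\star-1$ is inadmissible, $\hat T\ge T^\star$, and $\Delta_{\hat T}^2\ge\Delta_{T^\star}^2\ge\Delta^2/2$, which combined with the upper bound proves the lemma. The delicate point is exactly this last step: one must select the witnessing block close to $T^\star$ — the naive block $[T^\star,d]$ has threshold $\sim d\,\nu^2\ln(d\vee n)$, far too large for \eref{cond(s)} — and then book-keep the constants so that the resulting restriction on $\Delta^2$ comes out with precisely the exponent $2s/(1+2s)$ and a constant $R$ independent of $s$.
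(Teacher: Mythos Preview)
Your argument is correct and takes a genuinely different route from the paper's.

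The paper splits the work into two separate probabilistic steps. First (Lemma~\ref{hat-tout-seul}) it bounds $P(\hat T>T_s)$ directly, via a dedicated tail bound (Lemma~\ref{GD}) on $\sum_{\ell=m}^j Z_\ell^2$ for $m\ge T_s$. Second, on $\{\hat T\le T_s\}$, it exploits the \emph{admissibility} of $\hat T$: the block $[\hat T{+}1,T_s]$ satisfies the Lepski inequality, so $\sum_{\hat T+1}^{T_s}Z_\ell^2$ is small; it then sums over the possible values of $\hat T$ and uses further Gaussian/$\chi^2$ concentration to transfer this to $\sum_{\hat T+1}^{T_s}\delta_\ell^2\le\Delta^2/4$. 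Your approach instead fixes one uniform noise event $\mathcal A$ and argues deterministically on it: admissibility of $\lceil T_s\rceil$ gives $\hat T\le T_s$, and \emph{inadmissibility} of $T^\star-1$ (exhibited by the carefully chosen block $[T^\star{-}1,j_0]$, with $j_0$ controlled by the Sobolev tail) forces $\hat T\ge T^\star$ and hence $\Delta_{\hat T}^2\ge\Delta^2/2$. This ``from below'' argument avoids the sum over $k$ and the secondary concentration step, and packages the whole proof into one event plus algebra; the paper's route is closer to the usual Lepski bias--variance bookkeeping and makes the role of the block $[\hat T{+}1,T_s]$ explicit.

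Two minor remarks. Your claim that the hypothesis $n/\sigma^2\ge\alpha\ln d$ is needed to bound $P(\mathcal A^c)$ is not accurate: since $d\le d\vee n$, the union bound gives $P(\mathcal A^c)\le(d\vee n)^{2-C_1/8}\le n^{-\gamma}$ once $C_1\ge8(\gamma+2)$, with no relation between $n/\sigma^2$ and $\ln d$ required. (In the paper that hypothesis enters only through Lemma~\ref{hat-tout-seul}, which you bypass.) And in the line $\sum_{\ell\ge m_0}\delta_\ell^2\ge\Delta^2-\Delta_{T^\star-1}^2$, the exact identity is $\Delta^2-\Delta_{T^\star-2}^2$, but your inequality still holds since $\Delta_{T^\star-2}^2\le\Delta_{T^\star-1}^2<\Delta^2/2$. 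Neither point affects the validity of the proof.
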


The proof is based on an intermediate lemma, stating that, with large probability, $\hat T\le T_s$.
\begin{lemma}\label{hat-tout-seul}
	Under the conditions above, 
	for any $\gamma$, if we have
	\begin{equation}C_{\mathcal L}\ge 16\vee 4L^{2} \vee \frac{4(\gamma+2)}{\alpha^{\frac{1}{1+2s}}}\vee \frac{8L(\gamma+2)^{1/2}}{\alpha^{\frac{1}{2(1+2s)}}},\label{cond-c}\end{equation}  then
	\begin{equation*}
	P(\hat T>T_s)\le (d\vee n)^{-\gamma}\label{hat-T-ks}.
	\end{equation*}
\end{lemma}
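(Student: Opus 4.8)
\medskip

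\noindent\textbf{Proof strategy for Lemma \ref{hat-tout-seul}.}
The plan is to show that, with probability at least $1-(d\vee n)^{-\gamma}$, the value $T_s$ already satisfies the constraint defining $\hat T$. Enlarging $k$ only removes pairs $(j,m)$ from the range $d\ge j\ge m\ge k$, so the set of admissible $k$ is an up-set and
\[
\{\hat T>T_s\}=\Big\{\exists\,d\ge j\ge m\ge T_s:\ \sum_{\ell=m}^j Z_\ell^2>C_{\mathcal L}\,j\,\tfrac{\sigma^2}{n}\ln(d\vee n)\Big\}.
\]
Writing $\nu^2=\sigma^2/n$, $\delta_\ell=\theta^+_\ell-\theta^-_\ell$ and expanding $Z_\ell=\beta_\ell+\eps_\ell$ from the surrogate-data model \eqref{pseudo}, I decompose, for a fixed admissible pair,
\[
\sum_{\ell=m}^j Z_\ell^2=B+2C+D,\qquad B:=\sum_{\ell=m}^j\beta_\ell^2,\quad C:=\sum_{\ell=m}^j\beta_\ell\eps_\ell,\quad D:=\sum_{\ell=m}^j\eps_\ell^2,
\]
and control each piece against a fraction of $C_{\mathcal L}\,j\,\nu^2\ln(d\vee n)$ matching one of the four terms in \eqref{cond-c}, before taking a union bound over the at most $(d\vee n)^2$ pairs $(j,m)$.

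For the bias term $B$, since $|\beta_\ell|\le|\delta_\ell|$ and $\theta^\pm\in\Theta(s,L)$ one has $\sum_{\ell\ge T_s}\beta_\ell^2\le\sum_{\ell\ge T_s}\delta_\ell^2\le 4L^2T_s^{-2s}$; the definition $T_s=(\nu^2\ln(d\vee n))^{-1/(1+2s)}$ yields the identity $T_s^{-2s}=T_s\,\nu^2\ln(d\vee n)$, so that (using $m\ge T_s$ and $j\ge T_s$)
\[
B\le 4L^2\,T_s\,\nu^2\ln(d\vee n)\le 4L^2\,j\,\nu^2\ln(d\vee n)
\]
deterministically; this accounts for the $4L^2$ in \eqref{cond-c}. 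For $D$, the variable $D/\nu^2$ is $\chi^2$ with $r=j-m+1\le j$ degrees of freedom, so by the chi-square deviation bound (Lemma \ref{chi-deux-MD}, resp.\ Lemma \ref{chi2}) with $x=(\gamma+2)\ln(d\vee n)$ one gets $P(D>\nu^2(r+2\sqrt{rx}+2x))\le(d\vee n)^{-(\gamma+2)}$; bounding $r\le j\le j\ln(d\vee n)$, $2\sqrt{rx}\le r+x$, and $x\le(\gamma+2)\tfrac{j}{T_s}\ln(d\vee n)$, then using $T_s\ge\alpha^{1/(1+2s)}$ (a consequence of $n/\sigma^2\ge\alpha\ln d$, in the regime $d\gtrsim n$ so that $\ln(d\vee n)\asymp\ln d$), makes $r+2\sqrt{rx}+2x$ at most $\big(16+\tfrac{4(\gamma+2)}{\alpha^{1/(1+2s)}}\big)j\ln(d\vee n)$ up to harmless absolute constants, which is where the terms $16$ and $\tfrac{4(\gamma+2)}{\alpha^{1/(1+2s)}}$ enter.

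For the cross term, $\eps$ being Gaussian, $C\sim\Nr(0,\nu^2B)$ with $B\le 4L^2T_s\nu^2\ln(d\vee n)$ from the first step, so by the Gaussian tail bound \eqref{Gaussconc},
\[
P\Big(2|C|>b\,j\,\nu^2\ln(d\vee n)\Big)\le 2\exp\!\Big(-\frac{b^2j^2\nu^4(\ln(d\vee n))^2}{8\nu^2B}\Big)\le 2\exp\!\Big(-\frac{b^2j^2\ln(d\vee n)}{32L^2T_s}\Big),
\]
and since $j\ge T_s$ the exponent is $\le-\tfrac{b^2T_s\ln(d\vee n)}{32L^2}\le-(\gamma+2)\ln(d\vee n)$ as soon as $b\ge 8L\sqrt{(\gamma+2)/T_s}$, which (again via $T_s\ge\alpha^{1/(1+2s)}$) is implied by $b\ge\tfrac{8L(\gamma+2)^{1/2}}{\alpha^{1/(2(1+2s))}}$, the last term of \eqref{cond-c}. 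Choosing $C_{\mathcal L}$ as in \eqref{cond-c}, each fixed admissible pair violates $B+2C+D\le C_{\mathcal L}j\nu^2\ln(d\vee n)$ with probability at most $3(d\vee n)^{-(\gamma+2)}$; summing over the $\le(d\vee n)^2$ pairs and absorbing the constant (for $n$, hence $d\vee n$, larger than an absolute constant) gives $P(\hat T>T_s)\le(d\vee n)^{-\gamma}$.

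The main obstacle is the bookkeeping: the deterministic piece $B$ and the two stochastic pieces $C,D$ must each be funneled into exactly the shape $(\mathrm{const})\cdot j\,\nu^2\ln(d\vee n)$, which forces repeated use of the two structural facts $j\ge T_s$ and $T_s\ge\alpha^{1/(1+2s)}$ (the latter being the only point where the hypothesis $n/\sigma^2\ge\alpha\ln d$ is used), and the constants have to be tracked so as to line up with the four-way maximum in \eqref{cond-c}. The Gaussian cross term is the most delicate, since its variance itself depends on the bias bound, so the gain from the Sobolev condition $\Theta(s,L)$ has to be invoked twice.
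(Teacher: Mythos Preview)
Your proof follows the same route as the paper: reduce $\{\hat T>T_s\}$ to a union over pairs $T_s\le m\le j\le d$, decompose $\sum Z_\ell^2$ into the deterministic bias $B$, the Gaussian cross term $2C$ and the chi-square noise $D$, control $B$ via the identity $T_s^{-2s}=T_s\,\nu^2\ln(d\vee n)$, and bound $C,D$ by tail inequalities before a $d^2$-fold union bound. The paper packages the stochastic part as the auxiliary Lemma~\ref{GD} (applied with $x=\tfrac12 C_{\mathcal L}k\nu^2\ln(d\vee n)$) and handles $D$ via Lemma~\ref{chi2} directly---there the condition $C_{\mathcal L}\ge16$ is precisely what makes the applicability hypothesis $u^2\ge4r$ of that lemma hold (since $r\le k$ and $\ln(d\vee n)\ge1$)---whereas you invoke a Laurent--Massart form inline; these are organizational differences only, and your observation that $n/\sigma^2\ge\alpha\ln d$ enters solely to force $T_s\ge\alpha^{1/(1+2s)}$ matches the paper's use of that hypothesis.
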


To prove the result, we will need a tail bound given in Lemma \ref{GD} below. Recall that $Z$ is defined by $$Z_j= \frac 1n \sum_{i=1}^nY^j_i- \frac 2n \sum_{i=1}^{n/2}Y^j_i,\quad j=1,\ldots, d,$$
that is
$Z_j= \beta_j+\eps_j$, $j=1,\ldots,d,$ where $\beta_j= (1-\tau)(\theta_j^+-\theta_j^-)\I_{\{\tau\ge 1/2\}}+ \tau(\theta_j^+-\theta_j^-) \I_{\{\tau< 1/2\}}$ and $\eps_j\sim\Nr(0,\frac{\sigma^2}{n})$.

\begin{lemma} \label{GD} For $T_s\le \ell\le k
$,
\begin{equation*}
P\left(\Big|\sum_{j=\ell}^k (Z_j)^2-\sum_{j=\ell}^k(\beta_j)^{2}\Big|>x\right)\le 2 \exp \left(-\frac{x^{2}}{64L^{2}T_s^{-2s}\frac{\sigma^2}{n}}\right)+
\exp\left( -\frac{nx}{16\sigma^{2}}\right),
\end{equation*}
as soon as $x\ge 8(\ell-k)\sigma^2/n$.
\end{lemma}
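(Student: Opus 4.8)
The plan is to expand $(Z_j)^2=(\beta_j)^2+2\beta_j\eps_j+(\eps_j)^2$ and reduce the claim to separate control of a Gaussian (linear) fluctuation and a chi-square (quadratic) fluctuation. Writing
\begin{equation*}
\sum_{j=\ell}^k(Z_j)^2-\sum_{j=\ell}^k(\beta_j)^2=2\sum_{j=\ell}^k\beta_j\eps_j+\sum_{j=\ell}^k(\eps_j)^2,
\end{equation*}
I would split the budget $x$ between the two pieces, bounding the left-hand probability by $P\big(|2\sum_{j=\ell}^k\beta_j\eps_j|>x/2\big)+P\big(\sum_{j=\ell}^k(\eps_j)^2>x/2\big)$; since $\sum_{j=\ell}^k(\eps_j)^2\ge 0$, the lower tail of the difference is automatically contained in the first event.

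For the cross term, $2\sum_{j=\ell}^k\beta_j\eps_j$ is centered Gaussian with variance $\frac{4\sigma^2}{n}\sum_{j=\ell}^k(\beta_j)^2$. The key step is to use the regularity: since $|\beta_j|\le|\theta_j^+-\theta_j^-|$ and $\theta^+,\theta^-\in\Theta(s,L)$, and since $\ell\ge T_s$,
\begin{equation*}
\sum_{j\ge\ell}(\beta_j)^2\le 2\sum_{j\ge\ell}(\theta_j^+)^2+2\sum_{j\ge\ell}(\theta_j^-)^2\le 4L^2\ell^{-2s}\le 4L^2T_s^{-2s}.
\end{equation*}
This is precisely the mechanism --- once the smoothing index exceeds $T_s$ the residual mass is controlled --- alluded to in the discussion of Lepski's procedure, and it is why the lemma is only stated for $\ell\ge T_s$. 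Plugging this variance estimate into the Gaussian tail bound \eref{Gaussconc} gives the first summand, of the form $2\exp\big(-x^2/(64L^2T_s^{-2s}\sigma^2/n)\big)$ after the natural allocation of $x$.

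For the quadratic term I would note that $\sum_{j=\ell}^k(\eps_j)^2=\frac{\sigma^2}{n}\chi^2_{k-\ell+1}$, a chi-square variable with $k-\ell+1$ degrees of freedom scaled by $\sigma^2/n$. The hypothesis on $x$ (which should read $x\ge 8(k-\ell)\sigma^2/n$, the sign of $\ell-k$ in the statement being a typo) forces $x/2$ to exceed a fixed multiple of the mean $(k-\ell+1)\sigma^2/n$, placing us in the linear deviation regime; a Chernoff bound using $E[e^{\lambda\chi^2_D}]=(1-2\lambda)^{-D/2}$ for a fixed $\lambda\in(0,1/2)$ --- chosen large enough that the $2^{D/2}$-type prefactor is swallowed by half the exponent --- then yields $P\big(\sum_{j=\ell}^k(\eps_j)^2>x/2\big)\le\exp(-nx/(16\sigma^2))$. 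Summing the two bounds finishes the proof.

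The routine parts are the algebraic expansion, the $\Theta(s,L)$ bound on $\sum(\beta_j)^2$, and the Gaussian tail inequality. The main obstacle will be the chi-square estimate: one has to check that the stated threshold on $x$ is exactly strong enough to absorb the degrees-of-freedom prefactor and leave the clean exponential $\exp(-nx/(16\sigma^2))$; tracking $k-\ell$ versus $k-\ell+1$ there only affects absolute constants but must be done consistently with the constant $8$ in the hypothesis.
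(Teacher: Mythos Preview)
Your approach is essentially identical to the paper's: the same expansion $Z_j^2-\beta_j^2=2\beta_j\eps_j+\eps_j^2$, the same split into a Gaussian cross term controlled via $\sum_{j\ge\ell}\beta_j^2\le cL^2T_s^{-2s}$ and a scaled chi-square term handled by the large-deviation bound (the paper invokes its Lemma~\ref{chi2}, which is precisely the Chernoff argument you outline, under the same threshold $x\ge 8(k-\ell)\sigma^2/n$ --- you are right that the sign in the statement is a typo). The only difference is bookkeeping: the paper records $\sum\beta_j^2\le 2L^2T_s^{-2s}$ where your cruder bound $|\beta_j|\le|\theta_j^+-\theta_j^-|$ gives $4L^2T_s^{-2s}$, which would change the constant $64$ to $128$ but nothing else.
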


\begin{proof}[Proof of Lemma \ref{GD}]

\begin{align*}
P\left(\Big|\sum_{j=\ell}^k (Z_j)^2-\sum_{j=\ell}^k(\beta_j)^{2}\Big|>x\right)&\le P\left(\sum_{j=\ell}^k (\eps_j)^2+2\Big|\sum_{j=\ell}^k\eps_j\beta_j\Big|
>x\right)
\\
&\le P\left(\sum_{j=\ell}^k (\eps_j)^2>x/2\right)+P\left(\Big|\sum_{j=\ell}^k\eps_j\beta_j\Big|
>x/4\right).
\end{align*}
Now, observe that $\sum_{j=\ell}^k\eps_j\beta_j$ follows a Gaussian distribution $\Nr\big(0, \frac{\sigma^2}{n}\sum_{j=\ell}^k(\beta_j)^2\big)$, so that, using the concentration of the Gaussian distribution (see \eref{Gaussconc}) and the fact that $\sum_{j=\ell}^k(\beta_j)^2\le 2 L^{2}T_s^{-2s}$, since $\theta^+$ and $\theta^-$ are in $\Theta(s,L)$,  we obtain
\begin{align*}\label{CS}
P\left(\Big|\sum_{j=\ell}^k\eps_j\beta_j\Big|
>x/4\right)\le 2\exp \left(-\frac{x^{2}}{64L^{2}T_s^{-2s}\frac{\sigma^2}{n}}\right).
\end{align*}
Now, using \eref{chi2}, we get
\begin{equation*}
P\left(\sum_{j=\ell}^k (\eps_j)^2>x/2\right)\le \exp\left( -\frac{nx}{16\sigma^{2}}\right)\label{CSchi2},
\end{equation*}
as soon as $x\ge 8(k-\ell)\sigma^{2}/n$.
\end{proof}

\begin{proof}[Proof of Lemma \ref{hat-tout-seul}]
	We have
	\begin{equation*}
	P(\hat T>T_s)\le P\left(\exists k\ge \ell\ge T_s,\sum_{j=\ell}^k (Z_j)^2>C_{\mathcal L} k\frac{\sigma^2}{n}\ln (d\vee n)\right).
	\end{equation*}
	
	Now,
	since $k \ge T_s$,
	$$
	\sum_{j=\ell}^k (\beta_j)^{2}\le 2L^{2}T_s^{-2s}=2L^{2}\left(\frac{\sigma^2\ln (d\vee n)}{n}\right)^{\frac{2s}{1+2s}}= 2L^{2} T_s\frac{\sigma^2\ln (d\vee n)}{n}.
		$$
	Thus, if $C_{\mathcal L}\ge 4L^{2}$, we have 
	$\sum_{j=\ell}^k (\beta_j)^{2}\le (C_{\mathcal L}/2) k\frac{\sigma^2}{n}\ln (d\vee n)$.
We get, with $2x:= C_{\mathcal L} k 
\frac{\sigma^2}{n}\ln (d\vee n)$, the following inequality:
	\begin{align*}
	P\left(\sum_{j=\ell}^k  (Z_j)^2>2x\right)&\le 
	P\left(\Big|\sum_{j=\ell}^k  (Z_j)^2-\sum_{j=\ell}^k(\beta_j)^{2}\Big|>x\right).
	\end{align*}

	Now, from Lemma \ref{GD}, as soon as $C_{\mathcal L} k \ln (d\vee n)]\ge 16(k-\ell)$ (which is always true for instance if {{$C_{\mathcal L}\ge 16$}}), we have

		\begin{align*}
	P(\hat T>T_s)&\le \sum_{k\ge \ell\ge T_s}P\left(\sum_{j=\ell}^k (Z_j)^2>C_{\mathcal L}k\frac{\sigma^2}{n}\ln (d\vee n)\right)
	\\
	& \le \sum_{k\ge \ell\ge T_s}2\exp\left( -\frac{(C_{\mathcal L} k\frac{\sigma^2}n \ln (d\vee n))^{2}}{64L^{2}T_s^{-2s}\frac{\sigma^2}{n}}\right)+
	\exp \left(-C_{\mathcal L} k\ln (d\vee n)/4\right)
	\\
	&\le d^{2}\left[2\exp\left( -\frac{(C_{\mathcal L}T_s\frac{\sigma^2}n \ln (d\vee n))^{2}}{64L^{2}T_s^{-2s}\frac{\sigma^2}{n}}\right)+
	\exp \left(-C_{\mathcal L} T_s\ln (d\vee n)/4\right)\right]
	\\
	&\le d^{2}\left[2\exp\left( -\frac{C_{\mathcal L}^2}{64L^{2}}\left(\frac{\sigma^{2}}{n}\right)^{-\frac{1}{1+2s}}(\ln (d\vee n))^{\frac{2s}{1+2s}}\right)\right.
\\&\left.
	\quad+\exp
	\left(
	-\frac{C_{\mathcal L}}4
	\left(\frac{\sigma^{2}}{n}\right)^{-\frac{1}{1+2s}}
	(\ln (d\vee n))^{\frac{2s}{1+2s}}
	\right)
\right]
\\
&\le d^{2}\left[2\exp\left( -\frac{C_{\mathcal L}^2}{64L^{2}}\alpha^{\frac{1}{1+2s}}(\ln (d\vee n))\right)
+\exp
\left(
-\frac{C_{\mathcal L}}4
\alpha^{\frac{1}{1+2s}}
(\ln (d\vee n))
\right)
\right].
	\end{align*}

	This last term is clearly less than $3(d\vee n)^{-\gamma}$, as, by assumption,
	$\frac{C_{\mathcal L} \alpha^{\frac{1}{1+2s}}}{4}>\gamma+2$,  as well as $\frac{C_{\mathcal L}^2\alpha^{\frac{1}{1+2s}}}{64L^{2}}>\gamma+2$.
\end{proof}

Equipped with Lemma \ref{hat-tout-seul}, let us go back to the proof of Lemma \ref{hatTaux}.
\begin{proof}[Proof of Lemma 1]

To simplify the exposition, let us suppose that $\tau\ge 1/2$, the other case can be treated similarly, with elementary modifications.

We may write

$$\Delta_{{\hat T}}^{2}=\Delta^{2}-\sum_{j=\hat T+1}^{T_s}(\theta^{+}_j-\theta^{-}_j)^{2}-\sum_{j=T_s+1}^{d}(\theta^{+}_j-\theta^{-}_j)^{2}.$$
Yet, 
$$\sum_{k=T_s+1}^{d}(\theta^{+}_j-\theta^{-}_j)^{2}\le 4L^{2}T_s^{-2s}
=4L^{2}\left(\frac{\sigma^2\ln (d\vee n)}{n}\right)^{\frac{2s}{1+2s}}
\le \Delta^{2}/4,$$
as soon as {{$R\ge 16L^{2}$}}, since $\Delta^2\ge  R\left(\frac{\sigma^2\ln (d\vee n)}{n}\right)^{\frac{2s}{1+2s}}$ (condition \eref{cond(s)}).
Then,
$$
P\left(\{\hat T\le T_s\}\cap \{\Delta_{\hat T}^{2}\le  \Delta^{2}/2\}\right)
\le P\left(\{\hat T\le T_s\}\cap\bigg\{ \sum_{j=\hat T+1}^{T_s}(\theta^{+}_j-\theta^{-}_j)^{2}\ge \Delta^{2}/4\bigg\}\right).
$$
Now,
\begin{align*}
&P\left(
\{\hat T\le T_s\}\cap \bigg\{\sum_{j=\hat T+1}^{T_s}(\theta^{+}_j-\theta^{-}_j)^{2}\ge \Delta^{2}/4\bigg\}
\right)
\\
&\qquad\le P\left(\{\hat T\le T_s\}\cap \bigg\{\sum_{j=\hat T+1}^{T_s}\frac{(Z^{j})^{2}}{(1-\tau)^{2}}+\left[(\theta_{+}^{j}-\theta_{-}^{j})^{2}-\frac{(Z^{j})^{2}}{(1-\tau)^{2}}\right]\ge 
\Delta^{2}/4 \bigg\}\right).
\end{align*}
From the construction of $\hat T$, we know that $$\sum_{j=\hat T+1}^{T_s}\frac{(Z^{j})^{2}}{(1-\tau)^{2}}\le \frac{C_{\mathcal L} T_s}{(1-\tau)^{2}}\frac{\sigma^2}n{\ln (d\vee n)}=\frac{C_{\mathcal L}}{(1-\tau)^{2}}\left(\frac{\sigma^2\ln (d\vee n)}{n}\right)^{\frac{2s}{1+2s}}.$$
Thus, if $C_{\mathcal L} /(1-\tau)^{2}\le R/8$,  $$\sum_{j=\hat T+1}^{T_s}\frac{(Z^{j})^{2}}{(1-\tau)^{2}}\le \Delta^{2}/8.$$
Consequently, 
\begin{align*}
P&\left(\{\hat T\le T_s\}\cap \{\Delta_{\hat T}^{2}\le  \Delta^{2}/2\}\right)
\\&\le P\left(\{\hat T\le T_s\}\cap \bigg\{\sum_{j=\hat T+1}^{T_s}\left[(\theta_{+}^{j}-\theta_{-}^{j})^{2}-\frac{(Z^{j})^{2}}{(1-\tau)^{2}}\right]\ge 
\Delta^{2}/8 \bigg\}\right).
\\
&\le
 \sum_{k=1}^ {T_s} P\left(\{\hat T= k\}\cap \bigg\{\bigg|\sum_{j=k+1}^{T_s}\left[(\theta_{+}^{j}-\theta_{-}^{j})^{2}-\frac{(Z^{j})^{2}}{(1-\tau)^{2}}\right]\bigg|\ge 
 \Delta^{2}/8 \bigg\}\right)
 \\
 &\le
 \sum_{k=1}^ {T_s} P\left(\bigg|\sum_{j=k+1}^{T_s}\left[(\beta_j)^{2}-(Z^{j})^{2}\right]\bigg|\ge 
 (1-\tau)^{2}\Delta^{2}/8 \right)
 \\
 &\le
\sum_{k=1}^ {T_s} P\left(\sum_{j=k+1}^{T_s}(\eps_j)^{2}+2\bigg|\sum_{j=k+1}^{T_s}\eps_j\beta_j\bigg|\ge 
(1-\tau)^{2}\Delta^{2}/8 \right).
\end{align*}

It remains to proceed as for  Lemma \ref{GD}, using the standard inequalities as \eref{Gaussconc} and \eref{chi2}.

As soon as $(1-{\tau})^{2} n\Delta^{2}/(64\sigma^2)\ge T_s$, which is always true if $R\ge \frac{16}{(1-\tau)^2}$, we obtain:
$$\sum_{k=1}^ {T_s} P\left(\sum_{j=k+1}^{T_s}(\eps_j)^{2}\ge (1-{\tau})^{2} \Delta^{2}/16\right)
\le T_s\exp\left(- \frac{(1-{\tau})^{2}n \Delta^{2} }{128\sigma^{2}} \right).
$$

Moreover, since $\mbox{Var}\left(\sum_{j=k+1}^{T_s}\eps_j\beta_j\right)=\frac{\sigma^{2}}{n}\sum_{j=k+1}^{T_s}(\beta_j)^{2}\le
\frac{\sigma^{2}\Delta^{2}(1-\tau)^{2}}{n}$,
$$
\sum_{k=1}^ {T_s} P\left(\bigg|\sum_{j=k+1}^{T_s}\eps_j\beta_j\bigg|\ge (1-{\tau})^{2} \Delta^{2}/32\right)
\le T_s\exp\left( -\frac{(1-{\tau})^{2}n \Delta^{2}}{512\sigma^{2}}\right).
$$

 Now, $\frac{n\Delta^{2}}{\sigma^{2}}\ge {R\ln (d\vee n)T_s}$. Hence, for $R$ large enough, the right-hand terms may be bounded by $ n^{-\gamma}$ for $n$ greater than some absolute constant. Combining this bounds with Lemma \ref{hat-tout-seul}, we get the desired result, 
 as soon as  conditions \eqref{cond-c} and 
 \begin{equation}
 \label{cond-R}
 R\ge 16L^{2}\vee \frac{8C_{\mathcal L}}{\eps^2}\vee \frac{64}{\eps^2}\vee \frac{2^9\gamma}{\eps^2}
 \end{equation}
 are satisfied.
\end{proof}

\begin{proof}[Proof of Theorem \ref{hatT}]

To end the proof of the theorem, 
 
we use Lemma \ref{hatTaux}, the definition of $T_s$, and Proposition \ref{beta}, for any $\gamma,\gamma'$

\begin{align*}
&P\left(|\hat \tau(\hat T)-\tau|\ge \kappa(\gamma,\eps)\frac{\sigma^2\ln(n)}{n\Delta^2}\right)
\\&\leq P\left(\left\{|\hat \tau(\hat T)-\tau|\ge \kappa(\gamma,\eps)\frac{\sigma^2\ln(n) }{n\Delta^2}\right\}\cap\left\{\Delta_{\hat T}^2\ge \Delta^2/2\right\}\cap\{\hat T\le T_s\}\right)+n^{-\gamma}
\\&\leq \sum_{T=1}^{T_s} P\left(\left\{|\hat \tau(\hat T)-\tau|\ge \kappa(\gamma,\eps)\frac{\sigma^2\ln(n)}{n\Delta^2}\right\}\cap\left\{\Delta_{\hat T}^2\ge  \Delta^2/2\right\}\cap\{\hat T=T\}\right)+n^{-\gamma}
\\&\leq \sum_{T=1}^{T_s} P\left(\left\{|\hat \tau( T)-\tau|\ge \kappa(\gamma,\eps)\frac{\sigma^2\ln(n) }{2n\Delta_T^2}\right\}\cap\left\{\Delta_{T}^2\ge \Delta^2/2\right\}\cap\{\hat T=T\}\right)+n^{-\gamma}
\\&\leq \sum_{T=1}^{T_s} P\left(\left\{|\hat \tau( T)-\tau|\ge \kappa(\gamma,\eps)\frac{\sigma^2\ln(n) }{2n\Delta^2}\right\}\cap\left\{\Delta_{T}^2\ge  \frac{R T\sigma^2\ln (d\vee n)}{2n}\right\}\cap\{\hat T=T\}\right)\\&\kern12cm+n^{-\gamma}
\\&\leq T_sn^{-\gamma'}+n^{-\gamma},
\end{align*}

as soon as $R$ is large enough, which proves the theorem.
\end{proof}

\section{Appendix: concentration inequalities\label{conc-ineq}}

\paragraph{Simple Gaussian concentration}
If  $N\sim \Nr(0,1)$, then it is well known that, for $x>0$,
\begin{align} P(|N|>x)\le 2\exp \Big(-\frac{x^2}2\Big).
\label{Gaussconc}
\end{align}
\paragraph{Concentration for the Chi-square distribution (large deviations)}
\begin{lemma}\label{chi2}
Let $k$ be a positive integer and $U$ be a $\chi_k^2$ variable. Then
 $$\forall u^2\ge 4k,\quad P( U\ge u^2)\le \exp \left(\frac{-u^2}{8}\right).$$
\end{lemma}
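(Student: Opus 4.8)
The plan is to use the classical Chernoff bounding technique applied to the moment generating function of a chi-square variable. Recall that if $U\sim\chi_k^2$, then $E[e^{\lambda U}]=(1-2\lambda)^{-k/2}$ for every $\lambda<1/2$. First I would fix a convenient value of $\lambda$ in $(0,1/2)$ --- say $\lambda=1/4$ --- rather than carry out the full optimization in $\lambda$, since a fixed choice already suffices here and keeps the constants transparent.

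With $\lambda=1/4$, Markov's inequality applied to $e^{\lambda U}$ gives, for any $t>0$,
\[
P(U\ge t)\le e^{-t/4}\,E[e^{U/4}] = e^{-t/4}\,(1-1/2)^{-k/2}=2^{k/2}e^{-t/4}.
\]
It then remains to absorb the factor $2^{k/2}$ into the exponential. Writing $t=u^2$ and using the hypothesis $u^2\ge 4k$ together with $\ln 2<1$, one gets $\tfrac{k}{2}\ln 2\le\tfrac{k}{2}\le\tfrac{u^2}{8}$, hence $2^{k/2}=e^{(k/2)\ln 2}\le e^{u^2/8}$. Combining,
\[
P(U\ge u^2)\le e^{u^2/8}\,e^{-u^2/4}=e^{-u^2/8},
\]
which is exactly the claim.

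There is essentially no obstacle here: the only point requiring a moment's care is the choice of the free parameter $\lambda$, which must be small enough that $(1-2\lambda)^{-k/2}$ stays finite yet large enough that the resulting exponential rate $t/4$ beats the target rate $t/8$ by a margin covering $2^{k/2}$ under the constraint $u^2\ge 4k$. The value $\lambda=1/4$ does this with room to spare. Alternatively one could optimize over $\lambda$ to obtain the sharp rate $\tfrac12\bigl(t-k-k\ln(t/k)\bigr)$ and then verify the elementary inequality $\tfrac34 x-1-\ln x\ge 0$ for $x=t/k\ge 4$ (it holds at $x=4$ and the left-hand side is increasing there), but this refinement is not needed for the stated bound.
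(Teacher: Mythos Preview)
Your argument is correct. The Chernoff bound with the fixed choice $\lambda=1/4$ gives $P(U\ge u^2)\le 2^{k/2}e^{-u^2/4}$, and the constraint $u^2\ge 4k$ together with $\ln 2<1$ absorbs the prefactor as you indicate.

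The paper, however, takes a genuinely different route. It writes $\sqrt{U}=\sup_{a\in S_1}\sum_{i=1}^k a_i Z_i$ as the supremum of a Gaussian process over the unit sphere, invokes the Borell--TIS-type concentration inequality $P(\sup_a X_a-E\sup_a X_a\ge y)\le\exp(-y^2/2\sigma^2)$ with $\sigma^2=1$, bounds the expectation by $E\sqrt{U}\le\sqrt{EU}=\sqrt{k}$, and finally uses $u\ge 2\sqrt{k}$ to get $y=u-\sqrt{k}\ge u/2$, hence $\exp(-y^2/2)\le\exp(-u^2/8)$. Your MGF approach is more elementary and entirely self-contained, requiring nothing beyond the explicit form of the chi-square moment generating function and Markov's inequality; the paper's approach instead relies on a deeper Gaussian concentration tool but has the advantage of illustrating a technique that extends verbatim to general Gaussian suprema, not just sums of independent squares. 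For the present lemma your argument is arguably preferable for its directness.
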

This lemma is standard. We give a sketch of proof, for sake of simplicity.
 Recall the following result given for instance in \cite{MassStF}. If $X_t$ is a centered Gaussian process such that $\sigma^2:=\sup_t E X_t^2$, then
\begin{equation*}
\forall y>0,\quad P\left(\sup_t X_t- E \sup_t X_t\ge y\right)\le
\exp \left(-\frac {y^2}{2\sigma^2}\right).\label{supgaus}
\end{equation*}
Let $Z_1,\ldots,Z_k$ i.i.d. standard Gaussian variables such that
\begin{align*}
P( U\ge u^2)&=P\left(\sum_{i=1}^kZ_i^2\ge  u^2\right)
= P\left(\sup_{a\in S_1} \sum_{i=1}^ka_iZ_i\ge (u^2)^{1/2}\right)\\
&= P\left(\sup_{a\in S_1} \sum_{i=1}^ka_iZ_i-E\sup_{a\in S_1}
\sum_{i=1}^ka_iZ_i\ge (u^2)^{1/2}-E\sup_{a\in S_1}
\sum_{i=1}^ka_iZ_i\right)
\end{align*}
where $S_1=\{a\in R^k, \|a_i\|_{l^2(k)}=1\}$. Denote
$$X_a=\sum_{i=1}^ka_iZ_i\quad\mbox{ and }\quad y=(u^2)^{1/2}-E\left[\sup_{a\in S_1} \sum_{i=1}^ka_iZ_i\right].$$
Notice that
$$a\in S_1 \Rightarrow E\left(X_a\right)^2=1,$$  as well as
$$E\sup_{a\in S_1}X_a=E\left[\sum_{i=1}^kZ_i^2\right]^{1/2}\le \left[E\sum_{i=1}^kZ_i^2\right]^{1/2}=k^{1/2}. $$
Since $u^2\ge 4 k$, the announced result is proved as soon as
$y> (u^2)^{1/2}/2$.

\paragraph{Concentration for the Chi-square distribution (moderate deviations)}
\begin{lemma}\label{chi-deux-MD}
	If $Z$ has a $\chi^2 $ distribution with $k$ degrees of freedom, then for any $z>0$,
	\begin{align*}
	P(Z-k>z)\le \exp\left({-\frac{z^2}{16k}}\right).
	\end{align*}
	
\end{lemma}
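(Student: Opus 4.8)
The plan is to derive the bound from the concentration of $\sqrt Z$ around its mean, in exactly the spirit of the proof of Lemma \ref{chi2} given above. Write $Z=\sum_{i=1}^k Z_i^2$ with $Z_i$ i.i.d. standard Gaussian, so that $\sqrt Z=\sup_{a\in S_1}\sum_{i=1}^k a_iZ_i$ where $S_1$ is the Euclidean unit sphere of $\R^k$. The process $a\mapsto\sum_{i=1}^k a_iZ_i$ is centered Gaussian with $\sup_{a\in S_1}E(\sum_i a_iZ_i)^2=1$, and $E\sqrt Z\le(EZ)^{1/2}=\sqrt k$ by Jensen. Hence the Gaussian concentration inequality for suprema recalled in the Appendix gives, for every $y>0$,
$$P\big(\sqrt Z-\sqrt k\ge y\big)\le P\big(\sqrt Z-E\sqrt Z\ge y\big)\le\exp\!\big(-y^2/2\big).$$

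Next I would rewrite the target event in terms of $\sqrt Z$: the event $\{Z-k>z\}$ equals $\{\sqrt Z-\sqrt k>\sqrt{k+z}-\sqrt k\}$, so applying the previous display with $y=\sqrt{k+z}-\sqrt k=z/(\sqrt{k+z}+\sqrt k)$ yields
$$P(Z-k>z)\le\exp\!\Big(-\tfrac{z^2}{2(\sqrt{k+z}+\sqrt k)^2}\Big).$$
It then remains to bound the denominator in the moderate-deviation window: for $0<z\le k$ one has $\sqrt{k+z}+\sqrt k\le 2\sqrt{2k}$, hence $2(\sqrt{k+z}+\sqrt k)^2\le 16k$ and the right-hand side is at most $\exp(-z^2/(16k))$, as claimed. (For $z$ substantially larger than $k$ the stated inequality is too strong, since the true tail of $Z$ is only exponential in $z$ there; but only the moderate-deviation regime is needed in the applications above, and one may always read the lemma with the harmless restriction $z\le k$.)

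If one prefers to avoid the Gaussian-process machinery, an equally short alternative is the Cram\'er--Chernoff method: from $Ee^{tZ}=(1-2t)^{-k/2}$ for $0<t<1/2$ one gets $\ln Ee^{t(Z-k)}=\tfrac k2\big(-2t-\ln(1-2t)\big)$, and comparing the series $-2t-\ln(1-2t)=\sum_{n\ge2}(2t)^n/n$ with a geometric one shows $-2t-\ln(1-2t)\le 4t^2$ for $0\le t\le 1/4$, whence $P(Z-k>z)\le\exp(-tz+2kt^2)$; the choice $t=\min\{z/(4k),1/4\}$ then reproduces the bound (indeed with the sharper constant $1/8$ in place of $1/16$ on the relevant range). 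I do not expect a genuine obstacle here; the statement is classical, and the only point requiring a little attention is the bookkeeping on the admissible range of $z$ (equivalently of the auxiliary parameter $y$, resp.\ $t$), which is precisely what dictates the clean constant $16$.
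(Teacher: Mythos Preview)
Your proposal is correct. Your primary route---Gaussian concentration for $\sqrt Z$---is genuinely different from the paper's, which uses precisely your ``alternative'': the Cram\'er--Chernoff bound with the explicit choice $t=\tfrac12\big(1-(1+z/k)^{-1}\big)$, followed by the elementary inequality $\ln(1+u)\le u-u^2/8$ for $0\le u\le 1$. Both arguments are short; yours recycles the supremum-of-Gaussians machinery already invoked for Lemma~\ref{chi2} and so fits neatly into the Appendix, while the paper's is entirely self-contained. Your observation on the admissible range is also well taken: the paper's proof likewise needs $u=z/k\le1$, so the clause ``for any $z>0$'' in the statement should indeed be read as the moderate-deviation regime $0<z\le k$, which is all that the applications in Section~5.1 require.
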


\begin{proof}
	For all $0<t<\frac12$,
\begin{align*}
P(Z-k>z)&\le \exp{(-(k+z)t)}E[\exp{(tZ)}]
\\
&\le \exp\left(-(k+z)t-\frac k2\ln(1-2t)\right),
\end{align*}since the moment generating function of a $\chi^2 $ distribution with $k$ degrees of freedom is $t\mapsto (1-2t)^{k/2}$, defined for $0<t<\frac12$.
Taking $t=\frac12(1-(1+\frac z k)^{-1})$ and using $\ln(1+u)\le u-\frac{u^2}8$, for $0\le u\le 1$, we get the result.
\end{proof}

\section*{Acknowlegdement}

We warmly thank Oleg Lepski for fruitful discussions about minimax rates in change-point detection, which helped to significantly improve a first version of the paper. We are also grateful to Sylvain Delattre for his interesting comments about the simulation study.

\bigskip

This research has been partly supported by French National Research Agency (ANR) as part of the project FOREWER ANR-14-CE05-0028.

\bibliographystyle{plainnat}
\bibliography{SmoothClustBiblio}

\end{document}